\newcommand{\nif}{{n\ra +\infty}}
\newcommand{\R}{\mathbb{R}}
\newcommand{\N}{\mathbb{N}}
\newcommand{\Pa}{\mathbb{P}}
\newcommand{\lra}{\longrightarrow}
\newcommand{\ra}{\rightarrow}
\newcommand{\rau}{\rightarrow}
\newtheorem{theorem}{Theorem}
\newtheorem{lemma}[theorem]{Lemma}
\newtheorem{example}[theorem]{Example}
\newtheorem{definition}{Definition}
\def\Rr{\mathcal R}
\def\S{{\mathcal S}}
\newcommand{\Om}{\Omega}
\newcommand{\lb}{\lambda}
\newcommand{\sq}{\subseteq}
\newcommand{\vphi}{\varphi}
\newcommand{\vps}{\varepsilon}
\newcommand{\bp}{\begin{proof}}
\newcommand{\ep}{\end{proof}}
\begin{document}
\title{On localisation of eigenfunctions of the Laplace operator}

\author{{M. van den Berg\footnote{corresponding author}} \\
School of Mathematics, University of Bristol\\
Fry Building, Woodland Road\\
Bristol BS8 1UG\\
United Kingdom\\
\texttt{mamvdb@bristol.ac.uk}\\
\\
{D. Bucur}\\
Laboratoire de Math\'ematiques, Universit\'e Savoie Mont Blanc \\
UMR CNRS  5127\\
Campus Scientifique,
73376 Le-Bourget-Du-Lac\\
France\\
\texttt{dorin.bucur@univ-savoie.fr}}
\date{ 27 January 2025}\maketitle
\vskip 1.0truecm \indent

\begin{center}{ This paper is dedicated to the memory of our friend and colleague\\ Thomas Kappeler.}\end{center}

\begin{abstract}\noindent
We prove (i) a simple sufficient geometric condition for localisation of a sequence of first Dirichlet eigenfunctions provided the corresponding Dirichlet Laplacians satisfy a uniform Hardy inequality, and (ii) localisation of a sequence of first Dirichlet eigenfunctions for a wide class of elongating horn-shaped domains. We give examples of sequences of simply connected, planar, polygonal domains for which the corresponding sequence of first  eigenfunctions with either Dirichlet, or Neumann, boundary conditions $\kappa$-localise in $L^2$.
\end{abstract}
\vskip 1.0truecm \noindent \ \ \ \ \ \ \ \  { Mathematics Subject
Classification (2020)}: 35J25, 35P99.
\begin{center} \textbf{Keywords}: First Dirichlet eigenfunction, localisation, $\kappa$-localisation, Hardy inequality.
\end{center}
\mbox{}

\section{Introduction\label{sec1}}

In this paper we study the phenomenon of localisation for eigenfunctions of the Laplace operator for domains in Euclidean space. Let $\Om$ be a non-empty open, bounded and connected set in $\R^m$ with Lebesgue measure $|\Om|$. The spectrum of the Dirichlet Laplacian acting in $L^2(\Om)$ is discrete, and consists of eigenvalues $\{\lambda_1(\Om)\le \lambda_2(\Om)\le...\}$ accumulating at infinity only. We denote a corresponding orthonormal sequence of Dirichlet eigenfunctions by $\{u_{j,\Om},\,j\in \N\}$. Throughout we denote the {$L^p$ norm, $1\le p\le \infty$, by $\|\cdot\|_p$.}  Since $\Om$ is connected the first eigenvalue is simple, and the corresponding eigenspace is one-dimensional. The corresponding eigenfunction is determined up to a sign, and we choose $u_{1,\Om}>0$, and write $u_{\Om}:=u_{1,\Om}$. The question of localisation is the following.
Does there exist, given a small $\varepsilon\in (0,1)$, a measurable set $A_{\vps}\subset \Om$ with
\begin{equation}\label{ea}
\frac{|A_{\vps}|}{|\Om|}\le\varepsilon,\,\, \int_{A_{\vps}}u^2_{\Om}\ge 1-\varepsilon.
\end{equation}
If \eqref{ea} holds, then
\begin{equation}\label{ez}
1-\vps\le \|u_{\Om}\|^2_{\infty}|A_{\vps}|\le \varepsilon\|u_{\Om}\|^2_{\infty}||\Om|.
\end{equation}
{We recall that (see \cite[equation (26)]{vdB})}
\begin{equation}\label{eh}
\|u_{\Om}\|_{\infty}\le \bigg(\frac{e}{2\pi m}\bigg)^{m/4}\lambda_1(\Om)^{m/4}
\end{equation}
By \eqref{ez} and \eqref{eh} we have that
\begin{equation}\label{ej}
\lambda_1(\Om)|\Om|^{2/m}\ge \frac{2\pi m}{e} \Big(\frac{1-\vps}{\vps}\Big)^{2/m},
\end{equation}
and the first eigenvalue is, for small $\vps$, large compared with the Faber-Krahn lower bound. The latter states that
\begin{equation*}%\label{ej1}
\lambda_1(\Om)|\Om|^{2/m}\ge \lambda_1(B_1)|B_1|^{2/m},
\end{equation*}
where $B_1$ is an open ball with radius $1$.

The torsion function for an open set $\Om,\,0<|\Om|<\infty$ is the unique solution of
\begin{equation*}
-\Delta v=1,\, \qquad v\in H_0^{ 1 }(\Omega),
\end{equation*}
and is denoted by $v_{\Om}$. The torsion function is non-negative, bounded and monotone under set inclusion. A much studied quantity is the torsional rigidity, defined by
$$T(\Om)=\int_{\Om}v_{\Om}. $$
See, for example, \cite{LB} and some of the references therein.
It turns out that the localisation question for the torsion function stated below in $L^1$ is closely related to the localisation question for the first Dirichlet {eigenfunction in $L^2$} (see the paragraph above \eqref{e43}). Does there exist, given a small $\varepsilon\in (0,1)$, a measurable set $A_{\vps}\subset \Om$ with
\begin{equation}\label{eb}
\frac{|A_{\vps}|}{|\Om|}\le\varepsilon,\,\, \frac{\int_{A_{\vps}}v_{\Om}}{\int_{\Om}v_{\Om}}\ge 1-\varepsilon.
\end{equation}
If there exists $A_{\vps}$ satisfying \eqref{eb}, then
\begin{equation}\label{ek}
1-\vps\le \int_{A_{\vps}}\frac{v_{\Om}}{T(\Om)}\le T(\Om)^{-1}\|v_{\Om}\|_{\infty}|A_{\vps}|\le T(\Om)^{-1}\|v_{\Om}\|_{\infty}|\Om|\vps.
\end{equation}
By \cite[Theorem 1]{vdBC}, we have
\begin{equation}\label{el}
\|v_{\Om}\|_{\infty}\le (4+3m\log 2)\lambda_1(\Om)^{-1},
\end{equation}
and by the Kohler-Jobin inequality (see \cite{KJ1}, \cite{KJ2}), we have
\begin{equation}\label{em}
T(\Om)\lambda_1(\Om)^{(m+2)/2}\ge T(B_1)\lambda_1(B_1)^{(m+2)/2}.
\end{equation}
We find that
\begin{equation}\label{en}
\lambda_1(\Om)|\Om|^{2/m}\ge K_m\Big(\frac{1-\vps}{\vps}\Big)^{2/m},
\end{equation}
where $K_m>0$ can be read-off from \eqref{ek}, \eqref{el} and \eqref{em}.
Again we see that if there exists $A_{\vps}$ satisfying \eqref{eb}, then
the first eigenvalue is, for small $\vps$, large compared with the Faber-Krahn lower bound.

To simplify the discussion we define localisation for sequences. Let $p\in[1,\infty)$ be fixed, and let $(\Om_n)$ be a sequence of open sets in $\R^m$ with $0<|\Om_n|<\infty,n\in\N$. For $n\in \N$, let $f_n\in L^p(\Omega_n),\,0<\|f_n\|_p<\infty$.
Define the following collection of sequences
\begin{equation*}
\mathfrak{A}((\Omega_n))=\bigg\{(A_n): (\forall n\in \N)(A_n\subset\Omega_n, A_n\, \textup{measurable}), \lim_{n\rightarrow\infty}\frac{|A_n|}{|\Omega_n|}=0\bigg\},
\end{equation*}
and let
\begin{equation}\label{e2}
\kappa=\sup\bigg\{\limsup_{n\rightarrow\infty}\frac{\|f_n{\bf 1}_{A_n}\|_p^p}{\|f_n\|_p^p}:(A_n)\in\mathfrak{A}((\Omega_n))\bigg\},
\end{equation}
where ${\bf 1}_{.}$ is the indicator function. Note that $0\le \kappa\le 1$.

We write $(f_n)$ for the sequence of functions $f_n:\Om_n\rightarrow \R, n\in \N$ in the following definition (\cite{vdBK}).

\noindent\begin{definition}\label{def1} We say that
\begin{enumerate}
\item[\textup{(i)}] the sequence $\big(f_{n}\big)$ $\kappa$-localises in $L^p$ if $0<\kappa<1$,
\item[\textup{(ii)}] the sequence $\big(f_{n}\big)$ localises in $L^p$ if $\kappa=1$,
\item[\textup{(iii)}]the sequence $\big(f_{n}\big)$ does not localise in $L^p$ if $\kappa=0$.
\end{enumerate}
\end{definition}

We see that, using Cantor's diagonalisation procedure, the supremum in \eqref{e2} is achieved by a maximising sequence. Let $(A_n)$ be such a sequence. This sequence is not unique since modification by sets of measure $0$ does not change $\kappa$.

For $p=2$ and $f_n=u_{\Om_n}$, Definition \ref{def1}(ii) is equivalent to the following.
There exist sequences $(\varepsilon_n)$ with $ \lim_{n\rightarrow\infty}\vps_n=0$, and  $(A_{n})\in \mathfrak{A}((\Om_n))$ such that
\begin{equation}\label{ec}
\frac{|A_{n}|}{|\Om_n|}\le\varepsilon_n,\,\, \int_{A_{n}}u^2_{\Om_n}\ge 1-\varepsilon_n.
\end{equation}
Similarly for $p=1$ and $f_n=v_{\Om_n}$, Definition \ref{def1}(ii) is equivalent to the following.
There exist sequences $(\varepsilon_n)$ with $ \lim_{n\rightarrow\infty}\vps_n=0$, and  $(A_{n})\in \mathfrak{A}((\Om_n))$ such that
\begin{equation}\label{ey}
\frac{|A_{n}|}{|\Om_n|}\le\varepsilon_n,\,\, \frac{\int_{A_{n}}v_{\Om_n}}{\int_{\Om_n}v_{\Om_n}}\ge 1-\varepsilon_n.
\end{equation}
We conclude that if either $(u_{\Om_n})$ localises in $L^2$ or $(v_{\Om_n})$ localises in $L^1$ then, by \eqref{ec} and \eqref{ej}, or \eqref{ey} and \eqref{en},
\begin{equation}\label{ez}
\lim_{n\rightarrow\infty}\lambda_1(\Om_n)|\Om_n|^{2/m}=+\infty.
\end{equation}
We arrive at the same conclusion in the case of $\kappa$-localisation, by replacing $1-\vps$ by $\kappa(1-\vps)$ in the lines above.
On the other hand, by considering a sequence of rectangles $(R_n)$, $R_n=(0,1)\times (0,n)\sq \R^2$ we see that \eqref{ez} is clearly not sufficient for localisation of $(u_{R_n})$ or of $(v_{\Om_n})$.

It was shown in \cite[Theorem 4]{vdBBK} that if $(v_{\Om_n})$ either localises or $\kappa$-localises in $L^1$, then the corresponding sequence of  eigenfunctions $(u_{\Om_n})$ localises in $L^2$. It was pointed out below  \cite[Theorem 4]{vdBBK} that the torsion function does not localise for sequences of convex sets, while it was shown in \cite{vdBDPDBG} that there is a wide class of open, bounded, convex, elongating sequences of sets in $\R^m$ for which the sequence  of first Dirichlet eigenfunctions localises.
See \cite[Examples 8,9,10]{vdBDPDBG}. In \cite[Example 10]{vdBDPDBG} it was shown that the sequence $(u_{\Om_{n,\alpha}})$ localises in $L^2$, where

\begin{align}\label{e43}
\Omega_{n,\alpha}=\big\{&(x_1,x')\in \R^m: -2^{-1}n<x_1<2^{-1}n,\,\big(2n^{-1}|x_1|\big)^{\alpha}+|x'|^{\alpha}<1\big\},\, n\in \N,
\end{align}
and where $\alpha\in[1,\infty)$ is fixed.
{ The following localisation lemma (\cite[Lemma 3]{vdBDPDBG}) plays a crucial role in the proof of Theorem \ref{the2} below.}

\begin{lemma}\label{lem0}
For $n\in \N$, let $f_n\in L^2(\Omega_n)$ with $\|f_{n}\|_2>0,$ and $|\Omega_n|<\infty$. Then $\big(f_{n}\big)$ localises in $L^2$ if and only if
\begin{equation*}%\label{e13}
\lim_{n\rightarrow\infty}\frac{1}{|\Om_n|}\frac{\|f_n\|_1^2}{\|f_n\|_2^2}=0.
\end{equation*}
\end{lemma}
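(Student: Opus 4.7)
My plan is to prove both implications by elementary means: Cauchy--Schwarz handles the forward direction (localisation forces the stated limit to vanish), while a Markov-type superlevel-set construction handles the converse.

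For the forward direction, I would pick a witnessing sequence $(A_n)\in\mathfrak{A}((\Om_n))$ with $|A_n|/|\Om_n|\to 0$ and $\int_{\Om_n\sm A_n}f_n^2/\|f_n\|_2^2\to 0$. Splitting $\|f_n\|_1=\int_{A_n}|f_n|+\int_{\Om_n\sm A_n}|f_n|$ and applying Cauchy--Schwarz on each piece, with domain measures $|A_n|$ and $|\Om_n\sm A_n|\le |\Om_n|$ respectively, produces
\begin{equation*}
\frac{\|f_n\|_1}{|\Om_n|^{1/2}\|f_n\|_2}\le \Big(\frac{|A_n|}{|\Om_n|}\Big)^{1/2}+\Big(\frac{\int_{\Om_n\sm A_n}f_n^2}{\|f_n\|_2^2}\Big)^{1/2},
\end{equation*}
whose right-hand terms both tend to zero. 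Squaring yields the claim.

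For the converse, the natural threshold is $t_n=\|f_n\|_2/|\Om_n|^{1/2}$, with superlevel sets $A_n=\{x\in\Om_n:|f_n(x)|>t_n\}$. Markov's inequality gives $|A_n|/|\Om_n|\le \|f_n\|_1/(|\Om_n|^{1/2}\|f_n\|_2)$, whose square is precisely the quantity assumed to vanish. On the complement $|f_n|\le t_n$, hence $\int_{\Om_n\sm A_n}f_n^2\le t_n\|f_n\|_1$, and division by $\|f_n\|_2^2$ produces the same ratio $\|f_n\|_1/(|\Om_n|^{1/2}\|f_n\|_2)\to 0$. Together these give $(A_n)\in\mathfrak{A}((\Om_n))$ together with $\|f_n{\bf 1}_{A_n}\|_2^2/\|f_n\|_2^2\to 1$, which is localisation.

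The only delicate point is the choice of threshold: $t_n$ must exceed $\|f_n\|_1/|\Om_n|$ so that $|A_n|$ is small, and must lie below $\|f_n\|_2^2/\|f_n\|_1$ so that the tail $L^2$ mass is small. These two requirements are simultaneously compatible precisely when $\|f_n\|_1^2/(|\Om_n|\|f_n\|_2^2)\to 0$, and the geometric mean of the two bounds yields the clean choice $t_n=\|f_n\|_2/|\Om_n|^{1/2}$. I anticipate no serious obstacle beyond identifying this threshold.
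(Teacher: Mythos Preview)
The paper does not supply its own proof of this lemma; it is quoted as Lemma~3 of \cite{vdBDPDBG} and invoked without argument. There is therefore nothing in the present paper to compare your proposal against.

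That said, your argument is correct. The forward direction---split $\|f_n\|_1$ over $A_n$ and $\Omega_n\setminus A_n$ and apply Cauchy--Schwarz on each piece---is the natural route; note that your starting point (a witnessing sequence with $\int_{A_n}f_n^2/\|f_n\|_2^2\to 1$ rather than merely $\limsup=1$) is exactly the equivalent reformulation \eqref{ec} of localisation that the paper records just below Definition~\ref{def1}, so this is legitimate within the paper's framework. For the converse, the superlevel set $A_n=\{|f_n|>t_n\}$ with threshold $t_n=\|f_n\|_2/|\Omega_n|^{1/2}$ is exactly the right device: Markov's inequality bounds $|A_n|/|\Omega_n|$ by $\|f_n\|_1/(|\Omega_n|^{1/2}\|f_n\|_2)$, and the pointwise estimate $f_n^2\le t_n|f_n|$ on the complement bounds the residual $L^2$-mass fraction by the same quantity. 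Both vanish by hypothesis, yielding localisation in the strong form \eqref{ec}. Your closing observation that $t_n$ is the geometric mean of the two natural endpoint thresholds is a nice way to motivate the choice.
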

Lemma \ref{lem0} shows that a vanishing $L^1$-$L^2$ participation ratio is equivalent to localisation.

Definition \ref{def1} above was motivated by \eqref{ea} and \eqref{eb}.
We note that the very general definition of localisation above, or alternatively vanishing $L^1$-$L^2$ participation ratio in case $p=2$, does not provide any information on where these sequences localise. However, in some  concrete examples, such as in Example \ref{exa1} below, it is possible to obtain this information.

Other ratios have been defined in \cite[equations (7.1)-(7.3)]{DG}. We define the $L^p$-$L^q$ with $p<q$ participation ratio of a function $u\in L^p(\Om)\cap L^q(\Om)$ as the number
%\begin{equation*}
$|\Om|^{\frac{1}{q}-\frac{1}{p}}\frac{\|u\|_p}{\|u\|_q}.$
%\end{equation*}
It was shown in \cite{TB} that for $\Om \sq \R^m$ convex, there exist constants $k_m<\infty$ depending on $m$ only such that
\begin{equation}\label{ch1}
\|u_{\Om}\|_{\infty}\le k_m\Big(\frac{\rho(\Om)}{\textup{diam}(\Om)}\Big)^{1/6}\rho(\Om)^{-m/2} \|u_{\Om}\|_2,
\end{equation}
where $\rho(\Om)$ denotes the inradius of $\Omega$ and $\textup{diam}(\Om)$ its diameter.
It follows by \eqref{ch1} that the $L^2$-$L^{\infty}$ ratio is bounded from below by
\begin{equation}\label{t1}
\frac{1}{|\Om|^{1/2}}\frac{\|u_{\Om}\|_2}{\|u_{\Om}\|_{\infty}}\ge k_m^{-1}\Big(\frac{\textup{diam}(\Om)}{\rho(\Om)}\Big)^{1/6}\frac{\rho(\Om)^{m/2}}{|\Om|^{1/2}}.
\end{equation}
In order to get an upper bound for $|\Om|$ in terms of $\textup{diam}(\Om)$ and $\rho(\Om)$ we use John's ellipsoid theorem (\cite{J}). The latter asserts that if $\Om\subset \R^d$ is convex then there exists an ellipsoid $E(a)$ with semi-axes $(a_1,a_2,...,a_m)$ such that $E(a)\subset \Om\subset E(ma)$, where $E(ma)$ is a homothety of $E(a)$ with respect to its centre by a factor $m$. The ellipsoid $E(a)$ is of maximal measure. We may assume, by relabelling the axes, that $a_1\ge a_2\ge ...\ge a_m$.
Hence $a_m\le \rho(\Om)\le ma_m$, and $2a_1\le \textup{diam}(\Om)\le 2ma_1$. It follows that
\begin{align}\label{t2}
|\Om|&\le \omega_mm^ma_1a_2...a_m\le \omega_m m^ma_1^{m-1}a_m\nonumber\\&\le \frac{\omega_mm^m}{2^{m-1}}\textup{diam}(\Om)^{m-1}\rho(\Om).
\end{align}
By \eqref{t1} and \eqref{t2} there exists $\tilde{k}_m$ such that
\begin{equation}\label{t3}
\frac{1}{|\Om|^{1/2}}\frac{\|u_{\Om}\|_2}{\|u_{\Om}\|_{\infty}}\ge \tilde{k}_m\Big(\frac{\rho(\Om)}{\textup{diam}(\Om)}\Big)^{(3m-4)/6}.
\end{equation}
If $(u_{\Om_n})$ localises in $L^2$ then for $\varepsilon\in (0,1)$ and all $n$ sufficiently large, we have by \eqref{ec} that
\begin{equation}\label{t4}
1-\varepsilon\le \int_{A_n}u^2_{\Om_n}\le |A_n|\|u_{\Om_n}\|_{\infty}^2\le \varepsilon|\Om_n|\|u_{\Om_n}\|_{\infty}^2.
\end{equation}
If moreover $\Om_n$ are convex, then by \eqref{t3} and \eqref{t4} we have for all $n$ sufficiently large,
\begin{equation*}
%\label{ch2}
\frac{\rho(\Om_n)}{\textup{diam}(\Om_n)}\le L_m\Big(\frac{\varepsilon}{1-\varepsilon}\Big)^{6/(3m-4)},
\end{equation*}
for some finite $m$-dependent constant $L_m$.
This quantifies the elongation referred to in \eqref{ej} and \eqref{en}.

\smallskip

The rich interplay between localisation and the inverse of the torsion function has been studied in \cite{ADFJM}, \cite{DFM}, and the references therein.

\medskip

The main results of this paper are the following. In Section \ref{sec2} we construct a sequence of simply connected, planar, polygonal domains for which the corresponding sequence of first Dirichlet eigenfunctions $\kappa$-localises in $L^2$ (see as well \cite{SS} for a recent analysis of the eigenfunction localisation on dumbbell domains). In Section \ref{sec4} we prove a simple sufficient geometric condition for localisation of a sequence of first Dirichlet eigenfunctions provided the corresponding Dirichlet Laplacians satisfy a uniform strong Hardy inequality. In Section \ref{sec5} we prove localisation for a wide class of elongating horn-shaped domains. In the case of symmetric two-sided horn-shaped domains we give a sufficient condition  for localisation of the second Dirichlet eigenfunction. The results in that section vastly improve those presented in Theorem 6 and the various examples in \cite{vdBDPDBG}. In particular, no convexity hypotheses are made in Theorem \ref{the2} below. In Section \ref{sec3} we construct a sequence of simply connected, planar, polygonal domains for which the first non-trivial Neumann eigenfunction $\kappa$-localises in $L^2$.

\section{Example of $\kappa$-localisation for Dirichlet eigenfunctions\label{sec2}}

In this section we construct a sequence of simply connected, planar, polygonal domains for which the corresponding sequence of first Dirichlet eigenfunctions $\kappa$-localises in $L^2$.

Let $\vps\in (0,1), \delta>0$ and let $\theta\in(0,\delta)$. Define the following planar open sets.
The rectangle
\begin{equation*}\label{e4}
{R}_\vps=(-\vps, \vps) \times (-\vps^{-1},\vps^{-1}),
\end{equation*}
so that
\begin{equation*}\label{e5}
\lb_1({R}_\vps)= \frac{\pi^2}{4}(\vps^2+ \vps^{-2}).
\end{equation*}
The thin rectangle
\begin{equation*}\label{e8}
T_\theta=(0,2) \times (-\theta, \theta).
\end{equation*}
The square
\begin{equation*}\label{e6}
{S}_\delta=(2-\delta, 2+\delta)\times (-\delta, \delta),
\end{equation*}
so that
\begin{equation*}\label{e7}
\lb_1({S}_\delta)= \frac{\pi^2}{2\delta^2}.
\end{equation*}
The values of $\delta,\theta$ and $\vps$ will be chosen such that $\lb_1({S}_\delta)\approx \lb_1({ R}_\vps)$ and $\theta <\hskip -.15cm< \vps$.

Let
\begin{equation*}
\Om_{\vps, \theta, \delta} = {R}_{\vps}\cup T_{\theta}\cup S_{\delta}.
\end{equation*}
See Figure 1.

%\begin{figure}\label{fig1vbb.1}
%\centering
%\includegraphics[width=4cm]{figure1.png}
%\caption{$\Omega_{\vps,\theta,\delta}=R_{\vps}\cup T_{\theta}\cup S_{\delta}$}
%\end{figure}
%

\begin{figure}
\centering
\begin{tikzpicture}
\draw[black, thick](0,0)--(1,0);
\draw[black, thick](0,0)--(0.0,4);
\draw[black, thick](0,4)--(1,4);
\draw[black, thick](1,4)--(1,2.1);
\draw[black, thick](1,0)--(1,1.9);
\draw[black, thick](1,2.1)--(5,2.1);
\draw[black, thick](1,1.9)--(5,1.9);
\draw[black, thick](5,01.9)--(5,1);
\draw[black, thick](5,2.1)--(5,3);
\draw[black, thick](5,3)--(7,3);
\draw[black, thick](7,3)--(7,1);
\draw[black, thick](7,1)--(5,1);
\node at (0.5,2)    {$R_{\vps}$};
\node at (6,2)    {$S_{\delta}$};
\node at (3,2.3)    {$T_{\theta}$};
\end{tikzpicture}
\caption{$\Omega_{\vps,\theta,\delta}=R_{\vps}\cup T_{\theta}\cup S_{\delta}$}
\label{fig1}
\end{figure}

Since $\Om_{\vps, \theta, \delta}$ is connected, $\lb_1(\Om_{\vps, \theta, \delta})$ is simple. Let $u_{\Om_{\vps, \theta, \delta}}$ be the corresponding positive, $L^2$-normalised eigenfunction.

\begin{theorem} Let $\kappa\in (0,1)$ be fixed.  There exists a sequence of sets of the form $\Om_{\vps, \theta, \delta}$ for which the first Dirichlet eigenfunction $\kappa$-localises.
\end{theorem}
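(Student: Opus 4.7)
The plan hinges on the fact that $\lb_1(R_\vps)=\pi^2/(4\vps^2)+O(\vps^2)$ and $\lb_1(S_\delta)=\pi^2/(2\delta^2)$ can be tuned to coincide by taking $\delta\approx\sqrt{2}\,\vps$, while the connector $T_\theta$ has first Dirichlet eigenvalue $\approx\pi^2/(4\theta^2)$ which diverges as $\theta\to 0$ and hence cannot carry the low mode. The first Dirichlet eigenfunction $u_\Om$ of $\Om:=\Om_{\vps,\theta,\delta}$ therefore splits (asymptotically) as a positive combination supported on the two chambers, in a proportion that can be tuned by the parameters.

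To set this up, I would let $\phi_R\in H_0^1(R_\vps)$ and $\phi_S\in H_0^1(S_\delta)$ be the $L^2$-normalised positive first Dirichlet eigenfunctions, extended by zero to $\Om$. Their supports are disjoint, so testing $\alpha\phi_R+\beta\phi_S$ in the Rayleigh quotient yields $\lb_1(\Om)\le\min(\lb_1(R_\vps),\lb_1(S_\delta))$. I would then invoke a Jimbo-type spectral-stability result for dumbbell domains: as $\theta\to 0^+$ with $\vps,\delta$ fixed, the two lowest eigenvalues of $\Om$ converge to the pair $\{\lb_1(R_\vps),\lb_1(S_\delta)\}$, and any $L^2$-limit of the positive normalised first eigenfunction is $\phi_R$, $\phi_S$, or (in the degenerate case $\lb_1(R_\vps)=\lb_1(S_\delta)$) a nonnegative combination $\alpha\phi_R+\beta\phi_S$ with $\alpha^2+\beta^2=1$.

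Next, for fixed small $\vps,\theta>0$, I would study the mass ratio $F_{\vps,\theta}(\delta):=\int_{S_\delta}u_\Om^2$, which is continuous in $\delta$ by spectral continuity on this one-parameter family of Lipschitz polygonal domains. The previous paragraph forces $F_{\vps,\theta}(\delta)\to 0$ as $\delta\to 0^+$ (then $\lb_1(S_\delta)\gg\lb_1(R_\vps)$), while at $\delta=2\vps$ one has $\lb_1(S_{2\vps})<\lb_1(R_\vps)$ with a gap of order $\vps^{-2}$, so $F_{\vps,\theta}(2\vps)\to 1$ as $\theta\to 0^+$. Hence for $\theta$ chosen small enough depending on $\vps$ and $\kappa$, the intermediate value theorem produces $\delta_\ast=\delta_\ast(\vps,\theta)\in(0,2\vps)$ with $F_{\vps,\theta}(\delta_\ast)=\kappa$.

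Finally, choosing $\vps_n\to 0$, $\theta_n\to 0$ sufficiently fast, and $\delta_n:=\delta_\ast(\vps_n,\theta_n)$, and setting $\Om_n:=\Om_{\vps_n,\theta_n,\delta_n}$ and $A_n:=S_{\delta_n}$, I obtain $|A_n|=4\delta_n^2\to 0$ while $|\Om_n|\ge|R_{\vps_n}|=4$, so $|A_n|/|\Om_n|\to 0$ and $\int_{A_n}u_n^2=\kappa$ by construction, yielding the lower bound in \eqref{e2}. For the matching upper bound I would use that $u_n$ is close in $L^2$ to $\sqrt{1-\kappa}\,\phi_{R,\vps_n}+\sqrt{\kappa}\,\phi_{S,\delta_n}$, that $\|\phi_{R,\vps_n}\|_\infty=1$ uniformly in $n$ (from the explicit product-of-cosines structure), and that $u_n$ is exponentially small on $T_{\theta_n}$; hence any $(B_n)\in\mathfrak{A}((\Om_n))$ satisfies $\int_{B_n}u_n^2\le(1-\kappa)|B_n\cap R_{\vps_n}|+\kappa+o(1)\to\kappa$. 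The main obstacle is the quantitative spectral description in the splitting step, for which I would combine Mosco convergence with a $2\times 2$ reduced-model analysis relating $\lb_1(R_\vps)-\lb_1(S_\delta)$ to an exponentially small tunneling amplitude in $\theta$, together with an elliptic $L^2$-to-$L^\infty$ estimate on $R_{\vps_n}$ to secure the uniform sup-norm bound used in the upper bound.
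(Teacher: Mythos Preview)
Your proposal is correct and takes essentially the same route as the paper: tune $(\theta,\delta)$ near the resonance $\lambda_1(R_\vps)=\lambda_1(S_\delta)$, apply the intermediate value theorem to the mass functional to hit $\kappa$ exactly on the square, and then bound $\|u_n\|_{L^\infty(R_{\vps_n})}$ uniformly so that no competing sequence $(B_n)$ can extract more than $\kappa+o(1)$. The paper organises the IVT along a short two--parameter curve $t\mapsto(\theta,\delta)=(\eta\sin t,\,\delta_\vps-\tfrac1n+\tfrac{2t}{\pi n})$ shrinking to $(0,\delta_\vps)$, which forces $\delta_n\to\delta_\vps$ and hence a fixed $\gamma$--limit $R_\vps\cup S_{\delta_\vps}$; your one--parameter IVT in $\delta$ with $\theta$ fixed small works as well, though you should note that $F=\kappa\in(0,1)$ forces $\delta_*$ close to $\delta_\vps$ a posteriori. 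For the $L^\infty$ step the paper avoids the tunnelling/reduced--model machinery you outline: it simply uses $\gamma$--convergence to get $H^1$ (hence $L^2$) convergence of $u_n|_{R_\vps}$ to $\sqrt{1-\kappa}\,\phi_{R_\vps}$, then upgrades to $L^\infty$ by the elementary subharmonicity trick of adding $M_\vps|x-x_n|^2/4$ and averaging over small balls; only then is the diagonal choice $n=n_\vps$ made before sending $\vps\to 0$.
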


\begin{proof}
{\bf Step 1.}  Fix $\vps >0,$ and choose
\begin{equation}\label{e10.1}
\delta_\vps= \frac{\sqrt{2} \vps}{\sqrt{1+\vps^4}}.
\end{equation}
Then
\begin{equation*}%\label{e11}
\lb_1({ S}_{\delta_{\vps}})= \lb_1({ R}_\vps)= \frac{\pi^2}{2\delta_\vps^2}=\frac{\pi^2}{4}(\vps^2+ {\vps^{-2}}).
\end{equation*}

\medskip
\noindent
{\bf Step 2.} For $n \in \N$, $n \ge \frac {4}{\delta_\vps}$, $\delta \in [\delta_\vps-\frac 1n, \delta_\vps+\frac 1n]$, and $\theta \in (0,  \frac {\delta_\vps}{4})$ we define
\begin{equation*}\label{e12}
F(\theta, \delta)= \int_{T_\theta \cup S_\delta} u^2_{{\Om}_{\vps, \theta, \delta}}.
\end{equation*}
Since  $\Om_{\vps, \theta, \delta}$ is simply connected, the perturbation of the parameters $\theta$ and $\delta$ is $\gamma$-continuous (see for instance \cite[Chapter 4]{BB05}). Hence $F$ is continuous on
\begin{equation*}\label{e13}
(0,  \delta_\vps/4)\times  [\delta_\vps-n^{-1}, \delta_\vps+n^{-1}].
\end{equation*}
Moreover, we observe that
\begin{equation*}\label{e14}
\lim_{\theta \downarrow 0} F(\theta, \delta_\vps-n^{-1})=0,
\end{equation*}
and
\begin{equation*}\label{e15}
\lim_{\theta \downarrow 0} F(\theta, \delta_\vps+n^{-1})=1.
\end{equation*}
Setting $ \eta =\eta_{n,\vps}:=\frac 12\min\{\frac 1n,  \frac {\delta_\vps}{4}\}= \frac{1}{2n}$, we define the curve $C_\eta :[0,\pi]\rightarrow \R^2$ by
\begin{equation*}\label{e16}
C_\eta (t)= \Big(\eta \sin t,  \delta_\vps-\frac 1n+ \frac{2t}{\pi n}\Big),\,0\le t \le \pi.
\end{equation*}
The function $F$ is continuous along $C_\eta$ and takes the value $0$ at $t=0$ and $1$ at $t=\pi$. By continuity there exists $t^* \in (0, \pi)$ such that
\begin{equation*}\label{e17}
F(C_{\eta} (t^*))= \kappa.
\end{equation*}
Let $C_\eta(t^*)=(\theta_{n, \vps}, \delta_{n, \vps})$.

\medskip
\noindent
{\bf Step 3.} In this step, we keep $\vps$ constant, and let $n \rightarrow +\infty$. We have that
\begin{equation*}\label{e18}
\Om_{\vps, \theta_{n, \vps}, \delta_{n, \vps}} \stackrel{\gamma}{\lra}  { R}_\vps\cup{ S}_{\delta_\vps}
\end{equation*} $\gamma$-converges.
We get
\begin{equation*}\label{e19}
\lim_{n\rightarrow\infty}\lb_1(\Om_{\vps, \theta_{n, \vps}, \delta_{n, \vps}})= \lb_1( { R}_\vps\cup{ S}_{\delta_\vps})= \frac{\pi^2}{4}(\vps^2+  \vps^{-2}).
\end{equation*}
Moreover $u_{\Om_{\vps, \theta_{n, \vps}, \delta_{n, \vps}}}$ converges in $H^1(\R^2)$ to an eigenfunction $u\in H^1_0( { R}_\vps\cup{ S}_{\delta_\vps})$ corresponding to the first eigenvalue of $ { R}_\vps\cup{ S}_{\delta_\vps}$. By our choice of $t^*$ we get
\begin{equation}\label{e21}
\int_{S_{\delta_\vps}} u^2  = \kappa,\quad \int _{{R}_\vps} u^2  =1-\kappa.
\end{equation}
We now keep track of the $L^\infty$-norm of $u_{\Om_{\vps, \theta_{n, \vps}, \delta_{n, \vps}}}$ on ${ R}_\vps$, and claim that
\begin{equation}\label{e22}
\lim_{n \rightarrow +\infty}\| u_{\Om_{\vps, \theta_{n, \vps}, \delta_{n, \vps}}}\|_{L^\infty({ R}_\vps)} = \|u\|_{L^\infty({ R}_\vps)}.
\end{equation}
By the a.e. pointwise convergence we have that
\begin{equation*}\label{e23}
\|u\|_{L^\infty({ R}_\vps)} \le \liminf _{n \rightarrow +\infty}\| u_{\Om_{\vps, \theta_{n, \vps}, \delta_{n, \vps}}}\|_{L^\infty({ R}_\vps)}.
\end{equation*}
In order to prove the converse inequality, we follow a classical strategy (see for instance \cite[Theorem 2.2]{HLP} or \cite{vdBBK}, and the references therein). From the eigenvalue monotonicity with respect to inclusions we obtain by \eqref{eh}
\begin{equation*}\label{e24}
-\Delta u_{\Om_{\vps, \theta_{n, \vps}, \delta_{n, \vps}}} \le  \lb_1^{3/2}{(S_{\delta_{n,\vps}/4})} := M_\vps \mbox { in } {\mathcal D}'(\R^2).
\end{equation*}
Then for every point $x_n \in \R^2$ we get
\begin{equation*}\label{e25}
-\Delta \bigg(u_{\Om_{\vps, \theta_{n, \vps}, \delta_{n, \vps}}}  + M_\vps \frac{|\cdot -x_n|^{2}}{4}\bigg) \le 0 \mbox { in } {\mathcal D}'(\R^2).
\end{equation*}
So by subharmonicity
\begin{equation*}\label{e26}
u_{\Om_{\vps, \theta_{n, \vps}, \delta_{n, \vps}}}(x_n) \le \frac{\int_{B(x_n; \rho)}dx\,\big(u_{\Om_{\vps, \theta_{n, \vps}, \delta_{n, \vps}} }(x) + {M_\vps \frac{|x -x_n|^2}{4}}\big)}{|B(x_n; \rho)|},
\end{equation*}
where $B(p;r)=\{x\in \R^m:|p-x|<r\}$ for $p\in\R^m, r>0.$
Let $x_n\in {R}_\vps  $ be such that
\begin{equation*}\label{e27}
\| u_{\Om_{\vps, \theta_{n, \vps}, \delta_{n, \vps}}}\|_{L^\infty({ R}_\vps)} -\frac 1n \le u_{\Om_{\vps, \theta_{n, \vps}, \delta_{n, \vps}}}(x_n).
\end{equation*}
Taking the limit $n \rightarrow +\infty$, and assuming without loss of generality that $x_n \rightarrow x_0$, we get
\begin{align*}
\limsup_{n \rightarrow +\infty} \| u_{\Om_{\vps, \theta_{n, \vps}, \delta_{n, \vps}}}\|_{L^\infty({ R}_\vps)} &\le  \frac{\int_{B(x_0; \rho)}dx\,\big(u(x)  + M_\vps \frac{|x -x_0|^{2}}{4}\big) }{|B(x_0; \rho)|}\nonumber\\&\le \|u\|_{L^\infty({ R}_\vps)} + M_\vps\frac{\rho^2}{8}.
\end{align*}
Taking the limit $\rho \downarrow 0$ we obtain \eqref{e22}.

Since $u$ is a first eigenfunction on ${R}_\vps$ we have that $\frac{\|u\|_\infty}{\|u\|_2}= \frac{2}{|{R}_\vps|^\frac 12}$.  Consequently, from \eqref{e21} we get
\begin{equation*}\label{e29}
\|u\|_{L^\infty({ R}_\vps)}= 2\sqrt{1-\kappa}.
\end{equation*}

\medskip
\noindent
{\bf Step 4.} Now let $\vps \downarrow 0$. For every such $\vps$, we pick up from Step 3 some $n=n_\vps$ such that
\begin{equation}\label{e30}
\| u_{\Om_{\vps, \theta_{n_\vps}, \delta_{n_\vps}}}\|_{L^\infty({ R}_\vps)}\le 2\sqrt{1-\kappa} + \vps.
\end{equation}
This sequence $\kappa$-localises on $T_{\theta_{n, \vps}} \cup { S} _{ \delta_{n, \vps}}$.
\end{proof}

The data in Figure    2   have been obtained with the MATLAB  PDE toolbox and illustrate the mass distribution of the first eigenfunction.
\begin{figure}\label{fig1vbb}
\centering
\hskip -1.2cm
\includegraphics[width=4cm]{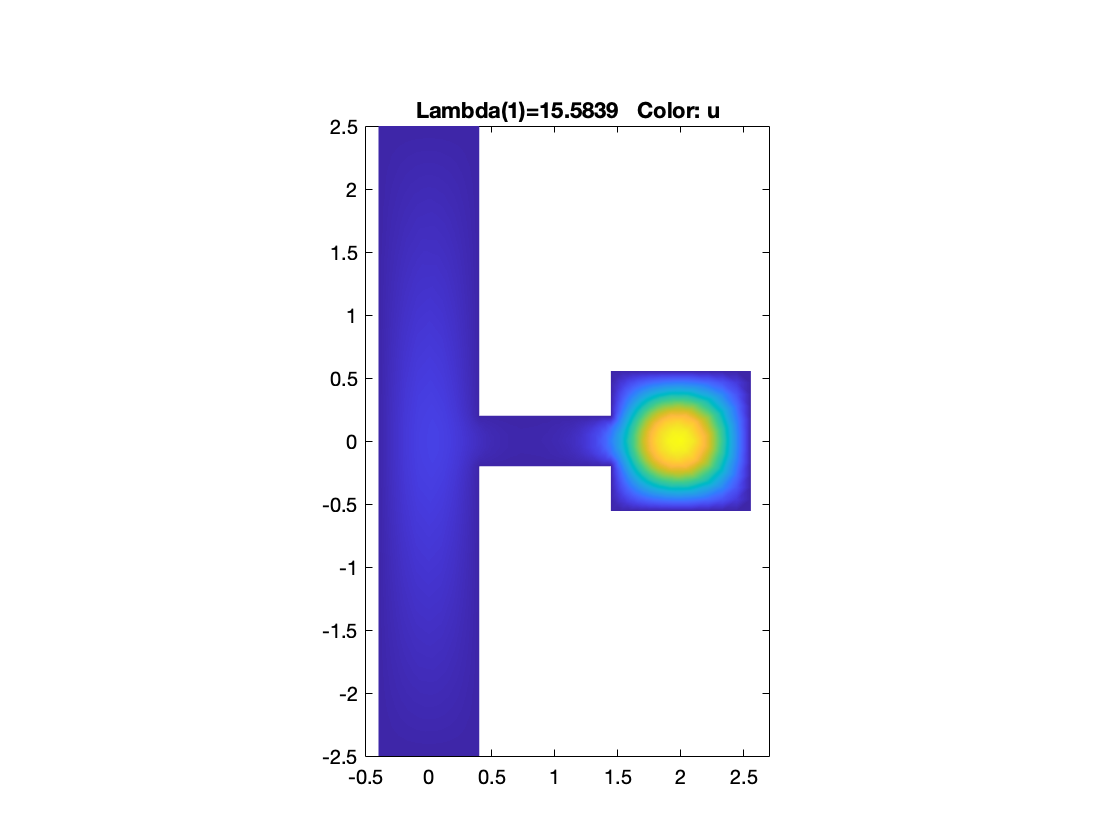}\hskip -1.2cm
\includegraphics[width=4cm]{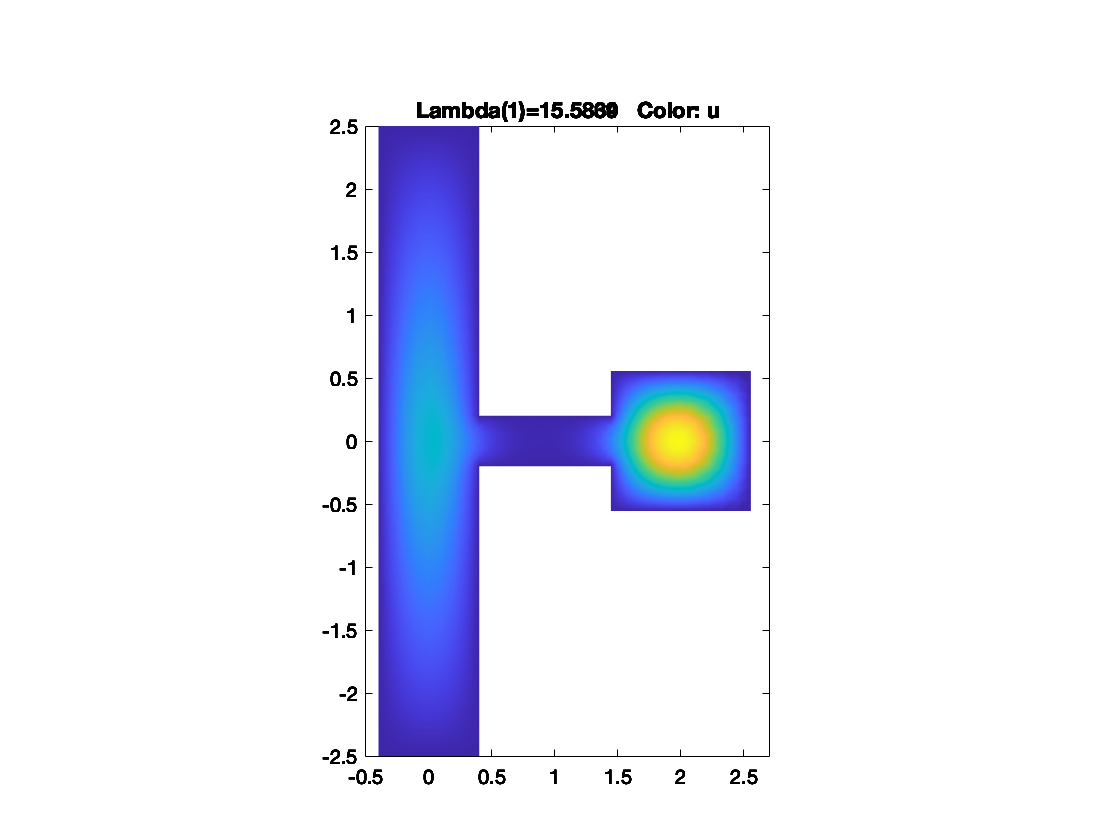}\hskip -1.2cm
\includegraphics[width=4cm]{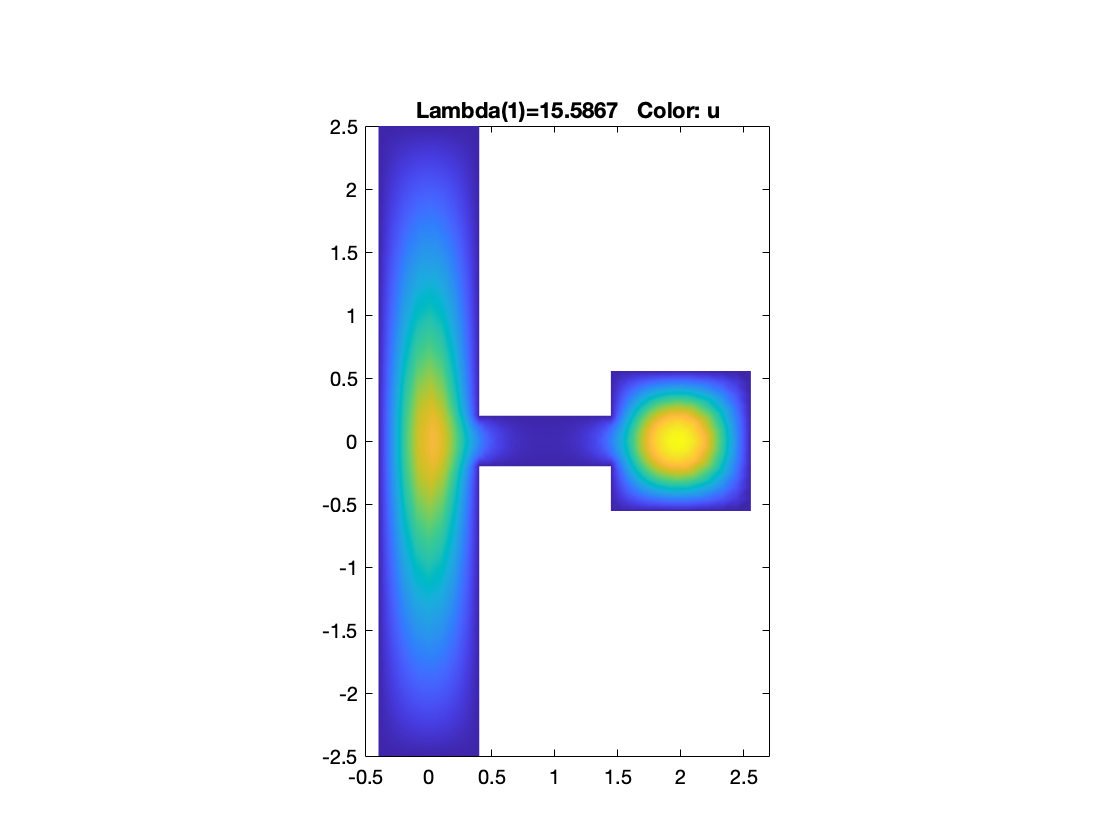}\hskip -1.2cm
\includegraphics[width=4cm]{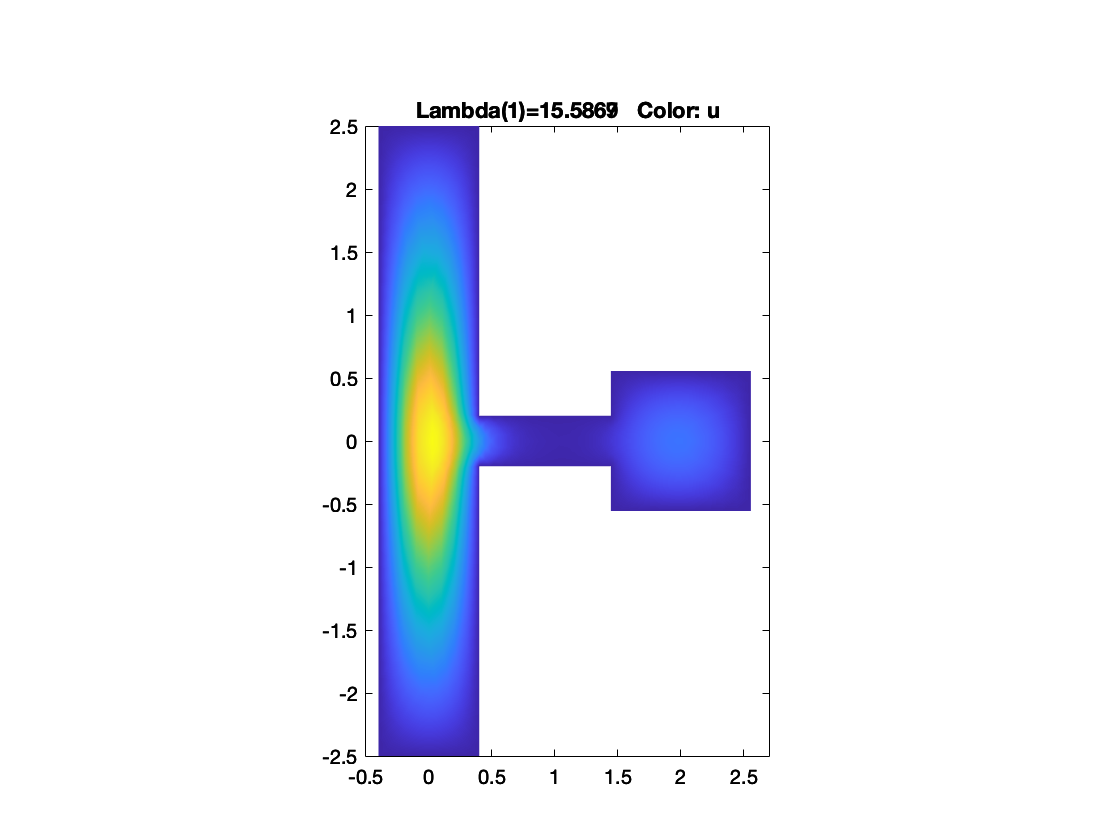}
\caption{The mass distribution of $u_1$ when perturbing the size of the square on the right: $\vps=0.4$, $\theta=0.2$, $\delta=\frac{\sqrt{2} \vps}{\sqrt{1+\vps^4}}-c$, for $c=0.00281$, $c=0.00286$, $c=0.00287$,
$c=0.00292$, respectively.}
\end{figure}

%\begin{remark}
We make the following observation. Given $\vps >0$, { and assume that $\delta$} is chosen slightly higher than the critical value in equation  \eqref{e10.1}.
In this case, the first eigenfunction will be (almost) supported by the square, while the second by the rectangle, provided the connecting tube is  thin enough.
 In such a way we can { construct} a sequence of domains  for which the first eigenfunctions localise (on the squares) while the second eigenfunctions do not localise.
 Assume now that $\delta$ is chosen slightly smaller than the critical value in equation   \eqref{e10.1}. In this case the second eigenfunction will be (almost) supported by the square,
 while the first one by the rectangle, provided the connecting tube is  thin enough.  In such a way we {  construct} a sequence of domains for which the second eigenfunctions localise (on the squares)
  while the first eigenfunctions do not localise. We conclude that there is no direct relationship between localisations of the first and second eigenfunctions respectively. {It is also possible to construct
   a sequence of domains for which both the first and the second Dirichlet eigenfunctions localise in $L^2$. Let $\Om_n$ be  a rhombus  with four sides of length $n$ and one diagonal of length $1$, with $ n \to +\infty$. The corresponding first eigenfunctions localise in $L^2$ (see  \cite{GJ98,GJ96,Je95} or Theorem \ref{the2} in this paper). The nodal line of the second eigenfunction is the shortest diagonal, so the second eigenfunction is a first eigenfunction on an elongating triangle and localises in $L^2$ as well.
   }
%\end{remark}

\section{Localisation of the first Dirichlet eigenfunction and Hardy's inequality \label{sec4}}

The results in this section are obtained under the hypothesis that the Dirichlet Laplacian satisfies the strong Hardy inequality. The mechanism for localisation is that the distance function is small on a very large set. The Hardy inequality implies that the boundary of this set is not thin, in terms of potential theory (see \cite{Anc}). This in turn implies that the eigenfunction is small on this large set and has most of its $L^2$ mass  on the complement.
\begin{definition}\label{def2}
The Dirichlet Laplacian $-\Delta$ acting in $L^2(\Omega)$ satisfies the strong Hardy
inequality, with constant $c_{\Omega}\in(0,\infty)$, if
\begin{equation}\label{Hardy}
\|\nabla w\|_2^2 \ge \frac{1}{c_{\Omega}} \int_{\Omega}\frac{w^2}{d_{\Omega}^2},
\quad \forall\, w \in C_c^\infty(\Omega),
\end{equation}
where $d_{\Omega}$ is the distance to the boundary function,
\begin{equation*}
d_{\Omega}(x)=\inf\{|x-y|:y\in \R^m\setminus\Omega\},\qquad x\in\Omega.
\end{equation*}
\end{definition}
Both the validity and applications
of inequalities like \eqref{Hardy} to spectral theory and partial differential equations have been
investigated in depth. See for example \cite{Anc},  \cite{EBD1}, \cite{EBD2}, \cite{EBD3} and \cite{EBD4}.
In particular it was shown in \cite[p.208]{Anc}, that for any proper simply connected open subset $\Omega$ in $\R^2$, inequality \eqref{Hardy} holds with
$c_{\Omega}=16$.
The following was proved in \cite {vdBK}.

\medskip

Let $(\Omega_n)$ be a sequence of open sets in $\R^m$ with $0<|\Omega_n|<\infty,\,n\in\N$, which satisfy \eqref{Hardy} with strong Hardy constants $c_{\Omega_n}$.
Suppose
\begin{equation}\label{e35.1}
\mathfrak{c}=\sup\{c_{\Omega_n}:n\in\N\}<\infty.
\end{equation}
\begin{itemize}
\item[\textup{(i)}]If  $(\eta_n)$ is a sequence of strictly positive real numbers such that
\begin{equation}\label{e34}
\lim_{n\rightarrow\infty}\frac{|\{d_{\Omega_n}\ge \eta_n\}|}{|\Omega_n|}=0,
\end{equation}
and
\begin{equation}\label{e35}
\lim_{n\rightarrow\infty}\frac{\eta_n^2|\Omega_n|}{\int_{\{d_{\Omega_n}\ge \eta_n\}}d_{\Omega_n}^2}=0,
\end{equation}
then $(v_{\Omega_n})$ localises along the sequence $(A_n)$ where $A_n=\{x\in\Omega_n:d_{\Omega_n}\ge \eta_n\}$.
\item[\textup{(ii)}]If any sequence $(A_n)$ of measurable sets, $A_n\subset\Omega_n,\,n\in \N,$ with
\begin{equation*}
\lim_{n\rightarrow\infty}\frac{|A_n|}{|\Omega_n|}=0,
\end{equation*}
satisfies
\begin{equation*}
\lim_{n\rightarrow\infty}\frac{\int_{A_n}d_{\Omega_n}^2}{\int_{\Omega_n}d_{\Omega_n}^2}=0,
\end{equation*}
then $(v_{\Omega_n})$ does not localise.
\end{itemize}

In \cite[Theorem 4]{vdBBK}, it was shown that if $(v_{\Om_n})$ localises in $L^1$ then $(u_{\Om_n})$ localises in $L^2$. This, together with the assertion  under (i) above, implies localisation of $(u_{\Om_n})$ provided \eqref{e34} and \eqref{e35} hold.
The following result asserts localisation of $(u_{\Om_n})$ under weaker assumptions.

\begin{theorem}\label{the1} Let $(\Omega_n)$ be a sequence of open sets in $\R^m$ with $0<|\Omega_n|<\infty,\,n\in\N$, which satisfies \eqref{e35.1}.
If there exists a sequence $(A_n)$ of measurable sets, $A_n\subset\Omega_n,\,n\in \N,$ with
\begin{equation}\label{e38}
\lim_{n\rightarrow\infty}\frac{|A_n|}{|\Omega_n|}=1,
\end{equation}
and which satisfies
\begin{equation}\label{e39}
\lim_{n\rightarrow\infty}\frac{\sup_{A_n}d_{\Om_n}}{\max_{\Om_n} d_{\Om_n}}=0,
\end{equation}
then for every $k \in \N$, $(u_{k,\Om_n})$ localises in $L^2$, where $u_{k,\Om_n}$ is an $L^2$-normalised eigenfunction corresponding to the $k$-th eigenvalue on $\Omega_n$.
\end{theorem}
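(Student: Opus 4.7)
The plan is to use the natural candidate $B_n := \Omega_n \setminus A_n$, show $(B_n)\in \mathfrak{A}((\Omega_n))$, and prove $\int_{B_n} u_{k,\Omega_n}^2 \to 1$ by proving its complement $\int_{A_n} u_{k,\Omega_n}^2 \to 0$. The first observation is immediate: hypothesis \eqref{e38} gives $|B_n|/|\Omega_n| \to 0$, so $(B_n)\in\mathfrak A((\Omega_n))$.

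The core of the argument is the following chain. Since $u_{k,\Omega_n}\in H_0^1(\Omega_n)$, by density the uniform strong Hardy inequality \eqref{Hardy} with constant $\mathfrak{c}$ applies and yields
\begin{equation*}
\lambda_k(\Omega_n) = \|\nabla u_{k,\Omega_n}\|_2^2 \ge \frac{1}{\mathfrak c}\int_{\Omega_n}\frac{u_{k,\Omega_n}^2}{d_{\Omega_n}^2} \ge \frac{1}{\mathfrak c}\int_{A_n}\frac{u_{k,\Omega_n}^2}{d_{\Omega_n}^2}.
\end{equation*}
Setting $\alpha_n := \sup_{A_n} d_{\Omega_n}$ and bounding $d_{\Omega_n}\le \alpha_n$ on $A_n$ gives
\begin{equation*}
\int_{A_n} u_{k,\Omega_n}^2 \le \mathfrak c\,\alpha_n^2\,\lambda_k(\Omega_n).
\end{equation*}
To conclude I need an upper bound on $\lambda_k(\Omega_n)$ in terms of $\rho_n := \max_{\Omega_n} d_{\Omega_n}$ (the inradius). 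For any $r<\rho_n$ there is $x_0\in\Omega_n$ with $B(x_0,r)\subset\Omega_n$, so by Dirichlet domain monotonicity and scaling $\lambda_k(\Omega_n)\le \lambda_k(B(x_0,r)) = \lambda_k(B(0,1))/r^2$; letting $r\uparrow\rho_n$ yields $\lambda_k(\Omega_n)\le \lambda_k(B(0,1))\,\rho_n^{-2}$. Combining,
\begin{equation*}
\int_{A_n} u_{k,\Omega_n}^2 \le \mathfrak c\,\lambda_k(B(0,1))\Big(\frac{\sup_{A_n} d_{\Omega_n}}{\max_{\Omega_n}d_{\Omega_n}}\Big)^{2}\longrightarrow 0
\end{equation*}
by hypothesis \eqref{e39}. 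Since $\|u_{k,\Omega_n}\|_2=1$, we obtain $\int_{B_n} u_{k,\Omega_n}^2 \to 1$, so the supremum $\kappa$ in \eqref{e2} equals $1$ and $(u_{k,\Omega_n})$ localises in $L^2$.

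I do not anticipate a serious obstacle: the only points requiring mild care are (a) extending Hardy from $C_c^\infty(\Omega_n)$ to $u_{k,\Omega_n}\in H_0^1(\Omega_n)$ by density, and (b) handling the case where the supremum defining $\rho_n$ is not attained, which is dealt with by taking a slightly smaller inscribed ball and passing to the limit as above. Uniformity of the Hardy constant \eqref{e35.1} and the $k$-independence (up to the factor $\lambda_k(B(0,1))$) are what allow the statement to hold for every $k\in\mathbb N$ simultaneously.
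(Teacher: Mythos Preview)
Your argument is correct and follows essentially the same route as the paper: bound $\int_{A_n} u_{k,\Omega_n}^2$ by $(\sup_{A_n} d_{\Omega_n})^2$ times the Hardy integral, control the latter by $\mathfrak{c}\,\lambda_k(\Omega_n)$, and then bound $\lambda_k(\Omega_n)$ via an inscribed ball to produce the ratio in \eqref{e39}. The only cosmetic difference is that the paper uses a ball of radius $\tfrac12\sup_{\Omega_n} d_{\Omega_n}$ (yielding an extra factor $4$) whereas you take $r\uparrow\rho_n$ directly; your version is slightly sharper and equally valid.
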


\begin{proof}  For an open set with finite measure $\Omega$ and with a Hardy constant $c_\Om$, let  $u_{k,\Om}$ be a $k$-th Dirichlet eigenfunction normalised in $L^2(\Om)$. By Cauchy-Schwarz and \eqref{e30} we have for any measurable set $A\subset \Om$,
\begin{align}\label{e40}
\int_Au_{k,\Om}^2&\le \int_A \frac{u_{k,\Om}^2}{d_{\Om}^2}\big(\sup_Ad_{\Om}\big)^2\nonumber\\&
\le \big(\sup_Ad_{\Om}\big)^2\int_{\Om} \frac{u_{k,\Om}^2}{d_{\Om}^2}\nonumber\\&
\le c_{\Om}\big(\sup_Ad_{\Om}\big)^2\int_{\Om} |\nabla u_{k,\Om}|^2 \nonumber\\&
= c_{\Om}\lambda_k(\Om)\big(\sup_Ad_{\Om}\big)^2.
\end{align}

Since $\Om$ contains a ball of radius $\frac{1}{2}\sup_{\Om}d_{\Om}$ we have by monotonicity of Dirichlet eigenvalues that
\begin{equation}\label{e41}
 \lambda_k(\Om)\le 4\lambda_k(B_1) \big(\sup_{\Om}d_{\Om}\big)^{-2}.
\end{equation}
By \eqref{e40} and \eqref{e41}
\begin{equation*}
%\label{e41a}
\int_Au_{k,\Om}^2\le 4 c_{\Om}\lambda_k(B_1) \bigg(\frac{\sup_Ad_{\Om}}{\sup_{\Om} d_{\Om}}\bigg)^2.
\end{equation*}
This implies the assertion in Theorem \ref{the1} since $\lim_{n\rightarrow\infty}\int_{A_n}u^2_{k,\Om_n}=0$, and so \\ $\lim_{n\rightarrow\infty}\int_{\Om_n\setminus A_n}u^2_{k,\Om_n}=1.$
By \eqref{e38}, $\lim_{n\rightarrow\infty}|\Om_n\setminus A_n|/|\Om_n|=0$. Hence $(u_{k,\Om_n})$ localises in $L^2$.
\end{proof}

Below we show that the hypotheses \eqref{e34}-\eqref{e35} imply those of Theorem \ref{the1}. Let $A_n=\{x\in\Om_n: d_{\Om_n}<\eta_n\}$. Hence \eqref{e34} implies \eqref{e38}.
Furthermore
\begin{align*}
\frac{\eta_n^2|\Omega_n|}{\int_{\{d_{\Omega_n}\ge \eta_n\}}d_{\Omega_n}^2}&\ge \frac{\eta_n^2|\Omega_n|}{\int_{\{d_{\Omega_n}\ge \eta_n\}} \sup_{\Om_n} d^2_{\Om_n}   }\nonumber\\&
\ge\frac{\eta_n^2}{\sup_{\Om_n} d^2_{\Om_n}}\nonumber\\&
\ge\bigg(\frac{\sup_{A_n}d_{\Om_n}}{\sup_{\Om_n} d_{\Om_n}}\bigg)^2.
\end{align*}
Hence \eqref{e35} implies \eqref{e39}.

\medskip

To prove that the hypotheses in Theorem \ref{the1} are strictly weaker than \eqref{e34}, \eqref{e35}, we have the following.
\begin{example}\label{exa1}{ Let $Q$ be the open unit square in $\R^2$ with vertices $(0,0),(1,0),(1,1)$ and $(0,1)$.
Let $0<\alpha<1,\,0<d <\infty$, and let $n\in\N$ be such that $dn^{-\alpha}<1$. For $a,b\in \R^2$ we denote by $L_{a,b}$ the closed line segment  with endpoints $a$ and $b$ respectively.
For $i=1,...,n-1,$ let $a_i=\big(\frac{i}{n},0\big),b_i=\big(\frac{i}{n},1-dn^{-\alpha}\big)$. The set
$$\Omega_{n,\alpha,d}\setminus \cup_{i=1}^{n-1}L_{a_i,b_i}$$ is open, simply connected with $|\Omega_{n,\alpha,d}|=1$. See Figure 3.  Hardy's inequality holds with $\mathfrak{c}=c_{\Omega_{n,\alpha,d}}=16$.
It was shown in \cite{vdBK} that
$(v_{\Omega_{n,\alpha,d}})$ localises in $L^1$ for $0<\alpha<\frac23$, and  does not localise for $\frac23<\alpha<1$. The proof that $(v_{\Omega_{n,\frac23,d}})$ $\kappa$-localises with $\kappa=\frac{d^3}{1+d^3}$ is quite involved  (see \cite{vdBK}). To prove that $(u_{\Omega_{n,\alpha,d}})$ localises in $L^2$ for all $0<\alpha<1$ we choose $A_n=\{x\in \Omega_{n,\alpha,d}:d_{\Omega_{n,\alpha,d}}<\frac{1}{2n}\}$. Then $\sup_{A_{n}}d_{\Om_{n,\alpha,d}}\le \frac{1}{2n}$, and
$$\max_{\Omega_{n,\alpha,d}}d_{\Omega_{n,\alpha,d}}\ge \frac12dn^{-\alpha}.$$}
Hence \eqref{e39} is satisfied. Also $|A_n|\ge 1-dn^{-\alpha}$ which implies \eqref{e38}. This implies localisation by Theorem \ref{the1}. We see that localisation takes place in a neighbourhood of the rectangle $\Omega_{n,\alpha,d}\cap\{x_2>1-dn^{-\alpha}\}$.
 \end{example}

%\begin{figure}\label{fig1vbb.2}
%\centering
%\includegraphics[width=3cm]{figure2.png}
%\caption{$\Omega_{n,\alpha,d}$ with $n-1$ parallel vertical line segments at distance $n^{-1}$ of length $1-dn^{-\alpha}$ { in the open unit square $Q$.}}
%\end{figure}

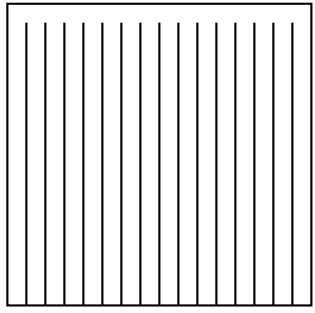
\begin{figure}
\centering
\begin{tikzpicture}
\draw[black, thick] (0,0) rectangle (4,4);
\draw[black, thick](0.25,0) -- (0.25,3.75);
\draw[black, thick](0.50,0) -- (0.50,3.75);
\draw[black, thick](0.75,0) -- (0.75,3.75);
\draw[black, thick](1,0) -- (1,3.75);
\draw[black, thick](1.25,0) -- (1.25,3.75);
\draw[black, thick](1.5,0) -- (1.5,3.75);
\draw[black, thick](1.75,0) -- (1.75,3.75);
\draw[black, thick](2,0) -- (2,3.75);
\draw[black, thick](2.25,0) -- (2.25,3.75);
\draw[black, thick](2.5,0) -- (2.5,3.75);
\draw[black, thick](2.75,0) -- (2.75,3.75);
\draw[black, thick](3,0) -- (3,3.75);
\draw[black, thick](3.25,0) -- (3.25,3.75);
\draw[black, thick](3.5,0) -- (3.5,3.75);
\draw[black, thick](3.75,0) -- (3.75,3.75);

\end{tikzpicture}
\caption{$\Omega_{n,\alpha,d}$ with $n-1$ parallel vertical line segments at distance $n^{-1}$ of length $1-dn^{-\alpha}$ { in the open unit square $Q$.}}
\label{fig2}
\end{figure}

\section{Localisation of the first Dirichlet eigenfunction for elongated horn-shaped regions \label{sec5}}

Below we obtain localisation results for sequences of sets in $\R^m$ which satisfy a monotonicity property in the $x_1$-direction along which elongation takes place. This monotonicity property is known in the literature as horn-shaped. The Dirichlet spectrum and eigenfunctions of horn-shaped open sets have been studied extensively in the non-compact setting in, for example, \cite{vdBD}, \cite{BavdB}, \cite{vdB1} and the  references there in. In \cite{vdBDPDBG} it was used to prove localisation for various examples such as \eqref{e43} mentioned above. We recall the set up and notation.

\begin{definition}\label{def3} Points in $\R^m$ are denoted by a Cartesian pair $(x_1,x')$ with $x_1\in \R,\, x'\in \R^{m-1}$. If $\Omega$ is an open set in $\R^m$, then we define its cross-section at $x_1$ by $\Omega(x_1)=\{x'\in \R^{m-1}:(x_1,x')\in \Omega\}$. A set $\Omega\subset\R^m$ is horn-shaped if it is non-empty, open, and connected, $x_1>x_2> 0$ implies $\Omega(x_1)\subset \Omega(x_2)$, and $x_1< x_2< 0$ implies $\Omega(x_1)\subset \Omega(x_2)$.
\end{definition}
Let $\Omega'$ be an open set in $\R^{m-1}$. Its first $(m-1)$-dimensional Dirichlet eigenvalue is denoted by $\mu(\Omega')$, and its $(m-1)$-dimensional Lebesgue measure is denoted by $|\Omega'|_{m-1}$.
For $a>0$ we let $a\Om'$ be the homothety of $\Om'$ by a factor $a$ with respect to that origin.

Let $-\infty<c_{-}\le0<c_{+}<\infty$. We consider the following class of monotone functions.
\begin{align*}
\mathfrak{F}&=\{f:[c_{-},c_{+}]\rightarrow [0,1],\, \textup{non-increasing, and right-continuous on}\, [0,c_{+}],\,\nonumber\\& \textup{non-decreasing, and left-continuous on}\, [c_{-},0],\,f(0)=1,\, f(x_1)<1\, \textup{for}\,x_1\ne 0\}.
\end{align*}
Given $f\in \mathfrak{F}$, let
$$f_n:[nc_-,nc_+]\rightarrow[0,1],\,f_n(x_1)=f(x_1/n), $$
let $\Om'\subset \R^{m-1}$ be a non-empty, open, bounded and convex set containing the origin, and let
$$\Om_{f_n,\Om'}=\{(x_1,x')\in\R^m: c_-n< x_1< c_+n,\, x'\in f(x_1/n)\Om'\}. $$

\begin{theorem}\label{the2}
\begin{enumerate}
\item[\textup{(i)}]If $f$ and $\Om'$ satisfy the hypotheses above, then $(u_{\Om_{f_n,\Om'}})$ localises in $L^2$.
{  \item[\textup{(ii)}]If $m=2$, if $f\in  \mathfrak{F}$ is concave such that $-c_-=c_+$, $f(x_1)=f(-x_1),0\le x_1<c_+$, and if $\Om'$ is an interval of length $1$ containing the origin, then $(u_{2,\Om_{f_n,\Om'}})$ localises in $L^2$.}
\end{enumerate}
\end{theorem}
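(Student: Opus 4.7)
The approach to part (i) rests on cross-sectional Poincar\'e. Set $\Om_n=\Om_{f_n,\Om'}$, $u_n=u_{\Om_n}$, and
\begin{equation*}
g_n(x_1)^2=\int_{f_n(x_1)\Om'}u_n(x_1,x')^2\,dx',
\end{equation*}
so that $\int g_n^2=1$. Since $f_n(x_1)\Om'$ is the $f_n(x_1)$-homothety of $\Om'$, one has $\mu(f_n(x_1)\Om')=\mu(\Om')f_n(x_1)^{-2}$, and $u_n(x_1,\cdot)\in H_0^1(f_n(x_1)\Om')$ for a.e. $x_1$. Poincar\'e on each cross-section integrated in $x_1$ yields
\begin{equation*}
\lambda_1(\Om_n)\ge\int|\nabla_{x'}u_n|^2\ge\mu(\Om')\int f_n(x_1)^{-2}g_n(x_1)^2\,dx_1.
\end{equation*}
I would next show $\lambda_1(\Om_n)\to\mu(\Om')$: the inclusion $\Om_n\subset\R\times\Om'$ (which uses $f_n\le 1$, convexity of $\Om'$, and $0\in\Om'$) gives $\lambda_1\ge\mu(\Om')$; for the upper bound, continuity of $f$ at $0$ with $f(0)=1$ produces, for any $\vps>0$, a $\delta>0$ with $(-\delta n,\delta n)\times(1-\vps)\Om'\subset\Om_n$, and the test function $\phi(x')\chi(x_1)$ with $\phi$ the first eigenfunction of $(1-\vps)\Om'$ and $\chi$ a half-sine on $(-\delta n,\delta n)$ furnishes $\lambda_1(\Om_n)\le(1-\vps)^{-2}\mu(\Om')+(\pi/(2\delta n))^2$.

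Consequently $\int(f_n^{-2}-1)g_n^2\,dx_1\to 0$. Because $f(s)<1$ for $s\ne 0$ and $f$ is continuous at $0$, for each $\delta'\in(0,1)$ there is $s_{\delta'}>0$ such that $f(y)\le 1-\delta'$ whenever $|y|\ge s_{\delta'}$, so
\begin{equation*}
\int_{\{|x_1|/n\ge s_{\delta'}\}}g_n^2\,dx_1\le\bigl((1-\delta')^{-2}-1\bigr)^{-1}\int(f_n^{-2}-1)g_n^2\,dx_1\to 0.
\end{equation*}
A diagonal choice $\delta'_n\downarrow 0$ gives $A_n=\Om_n\cap\{|x_1|/n<s_{\delta'_n}\}$ with $|A_n|\le 2ns_{\delta'_n}|\Om'|_{m-1}$ and $|\Om_n|=n|\Om'|_{m-1}\int_{c_-}^{c_+}f^{m-1}$, so $|A_n|/|\Om_n|\to 0$ while $\int_{A_n}u_n^2\to 1$, proving (i).

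For (ii), the same mechanism applied to $u_{2,\Om_n}$ with $\tilde g_n(x_1)^2=\int u_{2,\Om_n}(x_1,\cdot)^2\,dx'$ works as soon as $\lambda_2(\Om_n)\to\mu(\Om')$. The bound $\lambda_2\ge\lambda_1\to\mu(\Om')$ is automatic. For the upper bound, the symmetry $f(-x_1)=f(x_1)$ makes $\Om_n$ symmetric under $x_1\mapsto -x_1$, so the positive first eigenfunction $u_{1,\Om_n}$ is $x_1$-even, and any $x_1$-odd $H_0^1(\Om_n)$ function is automatically orthogonal to it and admissible in the variational characterisation of $\lambda_2$. Taking $\psi(x_1)=\sin(\pi x_1/(\delta n))$ on $[-\delta n,\delta n]$ (odd, zero outside) and $\phi$ as in (i), the Rayleigh quotient of $\psi\phi$ equals $(\pi/(\delta n))^2+(1-\vps)^{-2}\mu(\Om')$, giving $\limsup\lambda_2(\Om_n)\le\mu(\Om')$. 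The subsequent concentration of $\tilde g_n$ and construction of $A_n$ are identical to (i); notably they never invoke any symmetry of $u_{2,\Om_n}$ itself, only the limit $\lambda_2\to\mu(\Om')$.

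The main obstacle is securing the upper bound $\lambda_k(\Om_n)\le\mu(\Om')+o(1)$. For $\lambda_1$ this is a routine bump construction. For $\lambda_2$ the symmetry hypothesis $f(-x_1)=f(x_1)$ is essential, since it is exactly what renders an $x_1$-odd test function orthogonal to the unknown positive first eigenfunction without any further information. The concavity of $f$ and one-dimensionality of $\Om'$ do not enter the mass-concentration step once $\lambda_2\to\mu(\Om')$ is in hand, which suggests the hypotheses of (ii) might be relaxed.
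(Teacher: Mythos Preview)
Your argument is correct and takes a genuinely different route from the paper's. For (i), the paper proceeds via heat-content estimates: it bounds $Q_{\Om_n}(t)$ through a Brownian-motion computation exploiting the horn structure (Lemma~\ref{lem4}), combines this with the eigenvalue upper bound of Lemma~\ref{lem3} to show that the $L^1$--$L^2$ participation ratio $\|u_n\|_1^2/(|\Om_n|\,\|u_n\|_2^2)$ vanishes, and then invokes Lemma~\ref{lem0}. Your cross-sectional Poincar\'e approach is more direct and entirely elementary---no heat kernels, no probability---and it exhibits the localising sets $A_n$ explicitly rather than inferring their existence from Lemma~\ref{lem0}. One small slip: the claim ``for each $\delta'\in(0,1)$ there is $s_{\delta'}>0$ with $f(y)\le 1-\delta'$ for $|y|\ge s_{\delta'}$'' is false in general (nothing prevents $f\ge 1/2$ everywhere), but what you actually need is the converse implication---for each $s>0$, $\max(f(s),f(-s))<1$ by the hypothesis $f(y)<1$ for $y\ne 0$ together with monotonicity---and then the diagonalisation is in $s\downarrow 0$ rather than in $\delta'$. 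For (ii), the paper invokes the Grieser--Jerison nodal-line theory for planar convex domains to show that $u_{2,\Om_n}$ is itself odd in $x_1$, so that each nodal domain is a one-sided horn to which (i) applies; this is precisely where concavity of $f$ (to make $\Om_n$ convex) and $m=2$ are used. Your observation that one only needs $\lambda_2(\Om_n)\to\mu(\Om')$, which follows from the evenness of $u_{1,\Om_n}$ and a single odd test function, bypasses Grieser--Jerison entirely. As you suspect, this shows that the concavity of $f$ and the restriction to $m=2$ in (ii) are inessential: the symmetry $f(-x_1)=f(x_1)$ alone suffices.
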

\noindent The proof requires some lemmas which are given below.

The following is a generalisation of a two-dimensional bound. See \cite[Theorem 2]{MvdB}.
\begin{lemma}\label{lem3}
Let $\Om'$ be a non-empty open, bounded and convex set in $\R^{m-1}$ which contains the origin, let $f\in\mathfrak{F}$, and let
\begin{equation}\label{e50}
N^*=\min\{n\in \N:n\ge 1,\, f(c_+n^{-1/2})\ge 2^{-1}\}.
\end{equation}
If $n\ge N^*$ then
\begin{equation}\label{e50a}
\lambda_1(\Om_{f_n,\Om'})\le \mu(\Om')+\frac{\pi^2}{c_+^2n}+6\mu(\Om')(1-f(c_+n^{-1/2})).
\end{equation}
\end{lemma}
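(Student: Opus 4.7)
The plan is to prove Lemma \ref{lem3} by a simple domain-monotonicity argument: locate a rectangular (product) subdomain of $\Om_{f_n,\Om'}$ whose first Dirichlet eigenvalue can be computed exactly, then apply Dirichlet monotonicity.

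First I would set $B_n:=f(c_+ n^{-1/2})$ and $L_n:=c_+\sqrt n$, noting that the hypothesis $n\ge N^*$ in \eqref{e50} is exactly the statement $B_n\ge 1/2$. Since $f$ is non-increasing and right-continuous on $[0,c_+]$, for every $x_1\in(0,L_n)$ we have $x_1/n\in(0,c_+ n^{-1/2})$, hence $f(x_1/n)\ge B_n$. Because $\Om'$ is convex and contains the origin, this implies the inclusion of cross sections $B_n\Om'\subseteq f(x_1/n)\Om'$ for every such $x_1$, so the cylinder
\begin{equation*}
\tilde\Om_n:=(0,L_n)\times B_n\Om'
\end{equation*}
is contained in $\Om_{f_n,\Om'}$. (Note that $L_n<c_+ n$, so there is no issue at the right endpoint; the one-sided cylinder is used because $c_-$ may be zero.)

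Next, since $\tilde\Om_n$ is a product domain its first Dirichlet eigenvalue splits as a sum, and by Dirichlet monotonicity under inclusion
\begin{equation*}
\lambda_1(\Om_{f_n,\Om'})\le \lambda_1(\tilde\Om_n)=\frac{\pi^2}{L_n^2}+\frac{\mu(\Om')}{B_n^2}=\frac{\pi^2}{c_+^2 n}+\frac{\mu(\Om')}{B_n^2},
\end{equation*}
where the scaling $\mu(B_n\Om')=\mu(\Om')/B_n^2$ is the usual one. It then remains to bound $1/B_n^2$. Writing $\delta:=1-B_n\in[0,1/2]$, an elementary inequality gives $(1-\delta)^{-2}\le 1+6\delta$ on $[0,1/2]$; indeed $(1+6\delta)(1-\delta)^2-1=6\delta(\delta-1/2)(\delta-4/3)\ge 0$ for $\delta\in[0,1/2]$. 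Substituting yields
\begin{equation*}
\frac{\mu(\Om')}{B_n^2}\le \mu(\Om')+6\mu(\Om')(1-f(c_+n^{-1/2})),
\end{equation*}
and combining the two displays delivers \eqref{e50a}.

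There is no real obstacle: the argument is essentially the one from Theorem~2 of \cite{MvdB} adapted to arbitrary dimension, and the only minor point requiring attention is the verification of the one-variable bound $(1-\delta)^{-2}\le 1+6\delta$ on $[0,1/2]$, which is needed to turn the multiplicative distortion $1/B_n^2$ into the additive error term $6\mu(\Om')(1-f(c_+n^{-1/2}))$. The choice $L_n=c_+\sqrt n$ is optimised so that the transverse error $(1-f(c_+n^{-1/2}))$ and the longitudinal error $\pi^2/(c_+^2 n)$ remain comparable in typical applications (e.g.\ when $f$ is H\"older near $0$), which is the scale that will matter later when applying this lemma in the proof of Theorem \ref{the2}.
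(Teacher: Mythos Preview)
Your proof is correct and follows essentially the same route as the paper's: both insert the product cylinder $(0,c_+\sqrt n)\times f(c_+n^{-1/2})\Om'$ into $\Om_{f_n,\Om'}$, compute its first Dirichlet eigenvalue by separation of variables, and then bound $f(c_+n^{-1/2})^{-2}\le 1+6(1-f(c_+n^{-1/2}))$ using $f(c_+n^{-1/2})\ge 1/2$. The only cosmetic difference is in the last algebraic step: the paper writes $B^{-2}-1=(1-B)(1+B)/B^2$ and observes that $(1+B)/B^2$ is decreasing in $B$ with value $6$ at $B=1/2$, whereas you verify the equivalent inequality $(1-\delta)^{-2}\le 1+6\delta$ on $[0,1/2]$ by factoring $(1+6\delta)(1-\delta)^2-1=6\delta(\delta-1/2)(\delta-4/3)$.
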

The proof is similar in spirit to the one in \cite[p.2095]{vdBNTV2}, and runs as follows.
\begin{proof}
Consider the cylinder $C_{f_n,\delta}$ with base $f_n(\delta)\Om'$ and height $\delta$ with $\delta<c_+n$.
By separation of variables
\begin{equation*}
\lambda_1(C_{f_n,\delta})= \frac{\pi^2}{\delta^2}+(f_n(\delta))^{-2}\mu(\Om').
\end{equation*}
By monotonicity of Dirichlet eigenvalues under inclusion,
\begin{align*}
\lambda_1(\Om_{f_n,\Om'})&\le \lambda_1(C_{f_n,\delta})\nonumber\\&
=\frac{\pi^2}{\delta^2}+(f(\delta/n))^{-2}\mu(\Om')\nonumber\\ &
=\mu(\Om')+\frac{\pi^2}{\delta^2}+(1-f(\delta/n))\frac{1+f(\delta/n)}{(f(\delta/n))^2}\mu(\Om').
\end{align*}
Choose $\delta=n^{1/2}c_+$ so that
$$\lambda_1({\Om_{f_n,\Om'}})\le\mu(\Om')+\frac{\pi^2}{c_+^2n}+ \frac{1+f(c_+n^{-1/2})}{(f(c_+n^{-1/2}))^2}(1-f(c_+n^{-1/2}))\mu(\Om').$$
Since $f$ is right-continuous at $0$, $N^*<\infty$. Furthermore since $f$ {  is non-increasing on $[0,c_+]$, and $(1+f)f^{-2}$ is non-decreasing} for $f>0$
we have by \eqref{e50} that
$$\frac{1+f(c_+n^{-1/2})}{(f(c_+n^{-1/2}))^2}\le 6,\,n\ge N^*.$$
\end{proof}

The Dirichlet heat kernel for an open set $\Om$ is denoted by $p_{\Om}(x,y;t),\,x\in\Om,\,y\in \Om,\,t>0$. If $|\Om|<\infty,$ then the spectrum of the Dirichlet Laplacian is discrete, and the corresponding Dirichlet heat kernel has an $L^2$-eigenfunction expansion given by
\begin{equation*}
p_{\Om}(x,y;t)=\sum_{j=1}^{\infty}e^{-t\lambda_j(\Omega)}u_{j,\Omega}(x)u_{j,\Omega}(y).
\end{equation*}
Recall that
\begin{equation*}
w_{\Om}(x;t)=\int_{\Om}\,dy\,p_{\Om}(x,y;t),
\end{equation*}
is the solution of the heat equation
\begin{equation*}
\Delta w=\frac{\partial w}{\partial t},\, x\in \Om, \, t>0,
\end{equation*}
with Dirichlet boundary condition
\begin{equation*}
w(\cdot;t)\in H_0^1(\Om;t),
\end{equation*}
and initial condition
\begin{equation*}
w(x;0)=1,\,x\in \Om.
\end{equation*}
The heat content for an open set $\Om\subset \R^m$ with finite Lebesgue measure at $t$ is given by
\begin{equation*}
Q_{\Om}(t)=\int_{\Om}\int_{\Om}dx\,dy\,p_{\Om}(x,y;t),
\end{equation*}
We denote by $\Om'$ an open set in $\R^{m-1}$. Its heat content (in dimension $m-1$) is also denoted by $Q_{\Omega'}(t)$.
\begin{lemma}\label{lem2}
If $|\Om|<\infty$, then
\begin{equation}\label{xs0}
Q_{\Om}(t)\le e^{-t\lambda_1(\Omega)}|\Om|,
\end{equation}
and
\begin{equation}\label{xs1}
\frac{1}{|\Om|}\bigg(\int_{\Om}u_{\Om}\bigg)^2\le\frac{e^{t\lambda_1(\Om)}}{|\Om|}Q_{\Om}(t).
\end{equation}
If $|\Om'|_{m-1}<\infty$, then
\begin{equation}\label{xs}
Q_{\Om'}(t)\le e^{-t\mu(\Om')}|\Om'|_{m-1}.
\end{equation}
\end{lemma}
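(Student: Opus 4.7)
The natural approach is to expand the heat kernel in the orthonormal basis $\{u_{j,\Omega}\}$ and use Parseval's identity applied to the constant function $\mathbf{1}_{\Omega}\in L^2(\Omega)$. Write
\begin{equation*}
c_j=\int_{\Omega}u_{j,\Omega},\qquad j\in\N,
\end{equation*}
so that $c_j$ is the $j$-th Fourier coefficient of $\mathbf{1}_{\Omega}$ in the eigenbasis. Parseval gives $\sum_{j=1}^{\infty}c_j^2=\|\mathbf{1}_{\Omega}\|_2^2=|\Omega|$.

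Using the $L^2$-expansion of $p_{\Omega}$ displayed in the excerpt and integrating twice over $\Omega$, all cross terms vanish by orthonormality and one obtains the clean identity
\begin{equation*}
Q_{\Omega}(t)=\sum_{j=1}^{\infty}e^{-t\lambda_j(\Omega)}c_j^2.
\end{equation*}
From here both inequalities for $\Omega$ fall out at once: bounding $e^{-t\lambda_j(\Omega)}\le e^{-t\lambda_1(\Omega)}$ term by term and invoking Parseval yields \eqref{xs0}; retaining only the $j=1$ term (the rest being non-negative) gives $Q_{\Omega}(t)\ge e^{-t\lambda_1(\Omega)}c_1^2$, which after multiplying by $e^{t\lambda_1(\Omega)}$ and dividing by $|\Omega|$ is exactly \eqref{xs1}.

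The bound \eqref{xs} for $Q'_{\Omega'}(t)$ is the same argument applied verbatim in $\R^{m-1}$: expand the $(m-1)$-dimensional Dirichlet heat kernel of $\Omega'$ in its own eigenbasis, use Parseval on $\mathbf{1}_{\Omega'}$ to obtain $\sum_{j}(\int_{\Omega'}u'_{j,\Omega'})^2=|\Omega'|_{m-1}$, and dominate each $\lambda_j(\Omega')$ below by $\mu(\Omega')$.

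There is no real obstacle here; the only thing to be careful about is justifying the term-by-term integration that turns the pointwise eigenfunction expansion of $p_{\Omega}$ into the series representation of $Q_{\Omega}(t)$. Since $|\Omega|<\infty$, the semigroup $e^{t\Delta_{\Omega}}$ is Hilbert--Schmidt and the series converges in $L^2(\Omega\times\Omega)$, which together with $\mathbf{1}_{\Omega}\in L^2(\Omega)$ makes the double integration legitimate. This point can be dispatched in one line and the rest of the lemma is a short calculation.
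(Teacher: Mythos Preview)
Your proof is correct and follows essentially the same route as the paper: both derive the identity $Q_{\Omega}(t)=\sum_{j}e^{-t\lambda_j(\Omega)}\bigl(\int_{\Omega}u_{j,\Omega}\bigr)^2$ via Parseval applied to $\mathbf{1}_{\Omega}$, then bound above term by term for \eqref{xs0} and retain only the $j=1$ term for \eqref{xs1}, with \eqref{xs} being the $(m-1)$-dimensional copy. Your extra remark on the Hilbert--Schmidt justification of the term-by-term integration is a welcome addition that the paper leaves implicit.
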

\begin{proof}
It follows from Parseval's identity that
\begin{equation}
\label{xs2}
Q_\Om(t) = \sum_{j\in\N} e^{-t\lambda_j(\Om)} \left(\int_\Om u_{j,\Om}\right)^2
\leq e^{-t\lambda_1(\Om)} \sum_{j\in\N} \left(\int_\Om u_{j,\Om}\right)^2
= e^{-t\lambda_1(\Om)} |\Om|.
\end{equation}
This proves \eqref{xs0}. The first equality in \eqref{xs2} implies \eqref{xs1}. Inequality { \eqref{xs}} is the $(m-1)$-dimensional version of \eqref{xs0}.
\end{proof}

Let $(B(s),s\ge 0; \Pa_x,x\in \R^m)$ be Brownian motion
on $\R^m$ with generator $\Delta$. For $x\in \Omega$ we denote the first exit time of Brownian motion by
\begin{equation*}%\label{e62}
T_{\Omega}=\inf\{s\ge 0: B(s)\notin \Omega\},
\end{equation*}
It is a standard fact that
\begin{equation}\label{e63}
w_{\Om}(x;t)=\Pa_x[T_{\Om}>t].
\end{equation}
So this gives
\begin{equation*}%\label{e64}
\frac{1}{|\Om|}\bigg(\int_{\Om}u_{\Om}\bigg)^2\le \frac{e^{t\lambda_1(\Om)}}{|\Om|}\int_{\Om}dx\,\Pa_x[T_{\Om}>t].
\end{equation*}

The lemma below extends \cite[Theorem 5.3]{vdBD} to two-sided, horn-shaped regions.
\begin{lemma}\label{lem4} Let $\Om$ be horn-shaped in $\R^m$, and let both $|\Om|<\infty$, and $|\Om'|_{m-1}<\infty$. If $t>0$, then
\begin{equation}\label{e65}
Q_{\Om}(t)\le \int_{[c_-,c_+]}dx_1\,Q_{\Om(x_1)}(t)+4\bigg(\frac{t}{\pi}\bigg)^{1/2}Q_{\Om'}(t).
\end{equation}
\end{lemma}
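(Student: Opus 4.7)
The plan is to use the Brownian motion representation $w_\Om(x;t)=\Pa_x[T_\Om>t]$ and decompose each survival probability according to the one-dimensional coordinate of the driving motion. Write $B(s)=(B_1(s),B'(s))$ with $B_1$ and $B'$ independent, and for $x=(x_1,x')\in\Om$ with $x_1\ge 0$ set $\tau=\inf\{s\ge 0:B_1(s)=0\}$; split
\[
w_\Om(x;t)=\Pa_x[T_\Om>t,\,\tau>t]+\Pa_x[T_\Om>t,\,\tau\le t].
\]
After integration in $x\in\Om$ the first (``no-crossing'') piece will produce the slice sum $\int_{c_-}^{c_+}Q'_{\Om(x_1)}(t)\,dx_1$, while the second (``crossing'') piece will contribute the error $4(t/\pi)^{1/2}Q'_{\Om'}(t)$. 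The portion of $\Om$ with $x_1\le 0$ is handled by the symmetric decomposition using the same $\tau$ applied to $B_1$ started from $x_1\le 0$, together with horn monotonicity on $[c_-,0]$.

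The key observation for the no-crossing term is that on $\{\tau>t\}$ the running minimum $L(t):=\min_{s\le t}B_1(s)$ is nonnegative, and $B_1(s)\ge L(t)$ for every $s\le t$. Horn-shape monotonicity on $[0,c_+]$ gives $\Om(B_1(s))\subset\Om(L(t))$, so the time-varying transverse constraint $B'(s)\in\Om(B_1(s))$ is strengthened to $B'(s)\in\Om(L(t))$, a single random slice depending only on $B_1$. Conditioning on $B_1$ and using independence of $B'$ yields
\[
\Pa_x[T_\Om>t,\,\tau>t]\le\int_0^{x_1}\Pa_{x_1}[L(t)\in d\ell]\,\Pa_{x'}[T'_{\Om(\ell)}>t].
\]
I then integrate in $x'\in\Om(x_1)$, use the horn inclusion $\Om(x_1)\subset\Om(\ell)$ for $\ell\le x_1$ to absorb the spatial integral into $Q'_{\Om(\ell)}(t)$, integrate in $x_1$, and apply Fubini. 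The reflection-principle density $\Pa_{x_1}[L(t)\in d\ell]=2(4\pi t)^{-1/2}e^{-(x_1-\ell)^2/(4t)}\,d\ell$ for $\ell<x_1$ makes the inner integral $\int_\ell^{c_+}2(4\pi t)^{-1/2}e^{-(x_1-\ell)^2/(4t)}\,dx_1\le 1$, leaving the bound $\int_0^{c_+}Q'_{\Om(\ell)}(t)\,d\ell$ on the $x_1\ge 0$ half.

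For the crossing term I use only the trivial horn inclusion $\Om(B_1(s))\subset\Om'$, so $\{T_\Om>t\}\subset\{B'(s)\in\Om',\ \forall s\le t\}$; since $\{\tau\le t\}$ depends only on $B_1$, independence gives
\[
\Pa_x[T_\Om>t,\,\tau\le t]\le\Pa_{x_1}[\tau\le t]\,\Pa_{x'}[T'_{\Om'}>t].
\]
Integration over $x'\in\Om(x_1)\subset\Om'$ contributes at most $Q'_{\Om'}(t)$, and the reflection-principle identity $\Pa_{x_1}[\tau\le t]=2\int_{-\infty}^{-x_1}(4\pi t)^{-1/2}e^{-y^2/(4t)}\,dy$ for $x_1\ge 0$ together with Fubini yields
\[
\int_0^{\infty}\Pa_{x_1}[\tau\le t]\,dx_1=2\int_0^{\infty}u\,\frac{e^{-u^2/(4t)}}{\sqrt{4\pi t}}\,du=2(t/\pi)^{1/2},
\]
with the same value from the $x_1\le 0$ side, totalling the advertised $4(t/\pi)^{1/2}$.

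The main obstacle is pinning down the correct probabilistic decomposition for the no-crossing term. Once one sees that horn monotonicity on each half-axis permits replacing the time-varying target $\Om(B_1(s))$ by the single slice $\Om(L(t))$, the Fubini swap between $dx_1$ and the running-minimum density produces the sharp constant $1$ in the inner Gaussian integral, so the only residual loss is the crossing correction controlled by the explicit integral above.
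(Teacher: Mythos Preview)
Your proof is correct and follows essentially the same route as the paper's: both arguments use the Brownian decomposition $B=(B_1,B')$, condition on the running minimum of $B_1$ via the reflection principle, and split according to whether that minimum stays on the same side of $0$ as the starting point. The only cosmetic difference is that the paper first enlarges $\Om$ to the one-sided horn $\Om^{-}=\Om\cup((-\infty,0]\times\Om')$ before conditioning on $\min_{s\le t}B_1(s)=x_1-\xi$, whereas you work directly in $\Om$ and split by the hitting time $\tau$ of the hyperplane $\{x_1=0\}$; since $\{\tau>t\}=\{L(t)>0\}$ these are the same dichotomy, and the ensuing Fubini/convolution bound and the computation $\int_0^\infty \Pa_{x_1}[\tau\le t]\,dx_1=2(t/\pi)^{1/2}$ coincide line for line with the paper's.
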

\begin{proof}

It is convenient to define for horn-shaped sets,
\begin{equation}\label{e55}
\Om^-=\Om\cup\{(x_1,x')\in\R^m:x_1\le 0,x'\in\Om'\},
\end{equation}
and
\begin{equation*}
\Om^+=\Om\cup\{(x_1,x')\in\R^m:x_1\ge 0,x'\in\Om'\}.
\end{equation*}

For $x\in\Om,\,x_1>0$ we have by \eqref{e55}
\begin{equation*}
\Pa_x[T_{\Om}>t]\le \Pa_x[T_{\Om^{-}}>t].
\end{equation*}
Let $(B_1(s),\,s\ge 0)$ be $1$-dimensional Brownian motion in the $x_1$-direction, and let $(B'(s),\,s\ge 0)$ be an independent $(m-1)$-dimensional Brownian motion in the $x'$-plane.
Then, $B=(B_1,B')$. By solving the heat equation on $(-\infty,\xi)\times(0,\infty)$ with $\xi>0$, we have by \eqref{e63} and the preceding lines,
\begin{equation*}
\Pa_{0}[T_{(-\infty,\xi)}>t]=\int_{(0,\xi)}d\eta\,(\pi t)^{-1/2}e^{-\eta^2/(4t)}.
\end{equation*}
Hence the density of the random variable $\max_{0\le s\le t}B_1(s)$ with $B_1(0)=0$ is given by
\begin{equation*}
\rho(\xi;t)=(\pi t)^{-1/2}e^{-\xi^2/(4t)}1_{(0,\infty)}(\xi),
\end{equation*}
with a similar expression for $\min_{0\le s\le t}B_1(s).$
For $x\in\Om,\,x_1>0$,
\begin{align*}
\Pa_x&[T_{\Om^{-}}>t]\le \int_{\R^+}d\xi\,\rho(\xi;t)\Pa_{x'}[T_{\Om^{-}(x_1-\xi)}>t]\nonumber\\&
=\int_{(0,x_1)}d\xi\,\rho(\xi;t)\Pa_{x'}[T_{\Om^{-}(x_1-\xi)}>t]+\int_{(x_1,\infty)}d\xi\,\rho(\xi;t)\Pa_{x'}[T_{\Om^{-}(x_1-\xi)}>t]\nonumber\\&
=\int_{(0,x_1)}d\xi\,\rho(\xi;t)\Pa_{x'}[T_{\Om(x_1-\xi)}>t]+\int_{(x_1,\infty)}d\xi\,\rho(\xi;t)\Pa_{x'}[T_{\Om'}>t].
\end{align*}
We obtain that
\begin{align}\label{add}
&\int_{\Om\cap\{0\le x_1\le c_+\}}dx\,w_{\Om}(x;t)\nonumber\\&\le \int_{[0,c_+]}dx_1\int_{\Om(x_1)}dx'\,\int_{(0,x_1)}d\xi\,\rho(\xi;t)\Pa_{x'}[T_{\Om(x_1-\xi)}>t]\nonumber\\&
\hspace{3mm}+\int_{[0,c_+]}dx_1\,\int_{(x_1,\infty)}d\xi\,\rho(\xi;t)\int_{\Om'}dx'\,\Pa_{x'}[T_{\Om'}>t].
\end{align}
By Tonelli's Theorem we obtain that the first term in the right-hand side of \eqref{add} equals
\begin{align}\label{add1}
\int_{[0,c_+]}dx_1\int_{(0,x_1)}&d\xi\,\rho(\xi;t)\int_{\Om(x_1)}dx'\,\Pa_{x'}[T_{\Om(x_1-\xi)}>t]\nonumber\\&
\le \int_{[0,c_+]}dx_1\int_{(0,x_1)}d\xi\,\rho(\xi;t)\int_{\Om(x_1-\xi)}dx'\,\Pa_{x'}[T_{\Om(x_1-\xi)}>t]\nonumber\\&
=\int_{[0,c_+]}dx_1\int_{(0,x_1)}d\xi\,\rho(\xi;t)Q_{\Omega(x_1-\xi)}(t)\nonumber\\&
=\int_{[0,c_+]}dx_1Q_{\Om(x_1)}(t),
\end{align}
where we have used in the last equality that the integral of a convolution is the product of the integrals, and that the integral of a probability density equals $1$.
For the second term in the right-hand side of \eqref{add} we obtain by an integration by parts that
\begin{align}\label{add2}
\int_{[0,c_+]}dx_1\,&\int_{(x_1,\infty)}d\xi\,\rho(\xi;t)\int_{\Om'}dx'\,\Pa_{x'}[T_{\Om'}>t]\nonumber\\&
\le \int_{[0,\infty)}dx_1\,\int_{(x_1,\infty)}d\xi\,\rho(\xi;t)\int_{\Om'}dx'\,\Pa_{x'}[T_{\Om'}>t]\nonumber\\&
=\bigg(\frac{4t}{\pi}\bigg)^{1/2}Q_{\Om'}(t).
\end{align}
By \eqref{add1} and \eqref{add2}, we have
\begin{equation}\label{add3}
\int_{\Om\cap\{0\le x_1\le c_+\}}dx\,w_{\Om}(x;t)\le \int_{[0,c_+]}dx_1Q_{\Om(x_1)}(t)+\bigg(\frac{4t}{\pi}\bigg)^{1/2}Q_{\Om'}(t).
\end{equation}
Similarly,
\begin{equation}\label{add4}
\int_{\Om\cap\{c_-\le x_1\le 0\}}dx\,w_{\Om}(x;t)\le \int_{[c_-,0]}dx_1Q_{\Om(x_1)}(t)+\bigg(\frac{4t}{\pi}\bigg)^{1/2}Q_{\Om'}(t).
\end{equation}
Adding the contributions from \eqref{add3} and \eqref{add4} gives \eqref{e65}. Note that the hypotheses on $|\Om|$ and $|\Om'|_{m-1}$ guarantee that the right-hand side of \eqref{e65} is finite for all $t>0$.
\end{proof}

\noindent{\it Proof of Theorem \ref{the2}.}
{ (i)} Since $f\in \mathfrak{F}$, and $\Om'$ is convex containing the origin, $\Om_{f,\Om'}$ is horn-shaped.
By Lemma \ref{lem2} applied to the $(m-1)$-dimensional set $f_n(x_1)\Om'$ we have
\begin{align}\label{e72}
Q_{\Om_{f_n,\Om'}(x_1)}(t)&=Q_{f(x_1/n)\Om'}(t)\nonumber\\&
\le (f(x_1/n))^{m-1}|\Om'|_{m-1}e^{-t\mu(\Om')(f(x_1/n))^{-2}}\nonumber\\&
\le |\Om'|_{m-1}e^{-t\mu(\Om')(f(x_1/n))^{-2}}.
\end{align}
Furthermore
\begin{equation}\label{e73}
|\Om_{f_n,\Om'}|=n|\Om_{f,\Om'}|.
\end{equation}
By \eqref{xs1}, \eqref{e65}, and \eqref{e73}, we have
\begin{align}\label{e73a}
&\frac{1}{|\Om_{f_n,\Om'}|}\bigg(\int_{\Om_{f_n,\Om'}}u_{\Om_{f_n,\Om'}}\bigg)^2\nonumber\\&\le \frac{e^{t\lambda_1(\Om_{f_n,\Om'})}}{n|\Om_{f,\Om'}|}\Bigg(\int_{[nc_-,nc_+]}dx_1\,Q_{\Om_{f_n,\Om'}(x_1)}(t)+4\bigg(\frac{t}{\pi}\bigg)^{1/2}Q_{\Om'}(t)\bigg)\nonumber\\&
\le\frac{e^{t\lambda_1(\Om_{f_n,\Om'})}|\Om'|_{m-1}}{n|\Om_{f,\Om'}|}\Bigg(\int_{[nc_-,nc_+]}dx_1e^{-t\mu(\Om')(f(x_1/n))^{-2}}+4{  e^{-t\mu(\Omega')}}\bigg(\frac{t}{\pi}\bigg)^{1/2}\Bigg)\nonumber\\&
=\frac{e^{t\lambda_1(\Om_{f_n,\Om'})}|\Om'|_{m-1}}{|\Om_{f,\Om'}|}\Bigg(\int_{[c_-,c_+]}dx_1e^{-t\mu(\Om')(f(x_1))^{-2}}+\frac{4}{n}e^{-t\mu(\Om')}\bigg(\frac{t}{\pi}\bigg)^{1/2}\Bigg),
\end{align}
where we have used \eqref{e72} and \eqref{xs} in the third line above.
By \eqref{e50a} and \eqref{e73a} we have
\begin{align}\label{e73b}
\frac{1}{|\Om_{f_n,\Om'}|}&\bigg(\int_{\Om_{f_n,\Om'}}u_{\Om_{f_n,\Om'}}\bigg)^2\le \frac{e^{t\big(\frac{\pi^2}{nc_+^2}+6\mu(\Om')(1-f(n^{-1/2}c_+))\big)}|\Om'|_{m-1}}{|\Om_{f,\Om'}|}\nonumber\\&\times\bigg(\int_{[c_-,c_+]}dx_1e^{t\mu(\Om')(1-(f(x_1))^{-2})}+\frac{4}{n}\bigg(\frac{t}{\pi}\bigg)^{1/2}\bigg).
\end{align}
To complete the proof we choose
\begin{equation}\label{e76}
t=t_n=\bigg(\frac{\pi^2}{nc_+^2}+6\mu(\Om')(1-f(n^{-1/2}c_+))\bigg)^{-1}.
\end{equation}
Substituting this into \eqref{e73b} gives
\begin{align}\label{e77}
&\frac{1}{|\Om_{f_n,\Om'}|}\bigg(\int_{\Om_{f_n,\Om'}}u_{\Om_{f_n,\Om'}}\bigg)^2\le\frac{e|\Om'|_{m-1}}{|\Om_{f,\Om'}|}\nonumber\\&
\hspace{9mm}\times\bigg(\int_{[c_-,c_+]}dx_1\,e^{t_n\mu(\Om')(1-(f(x_1))^{-2})}+\bigg(\frac{16t_n}{\pi n^2}\bigg)^{1/2}\bigg).
\end{align}
The integrand in the first term in the right-hand side of \eqref{e77} side is bounded by $1,$ and is integrable on $[c_-,c_+]$. This term goes to $0$ as $n\rightarrow\infty$ by Lebesgue's dominated convergence theorem since $t_n\rightarrow \infty$ and $1-(f(x_1))^{-2}<0$ for all $x_1\ne 0$. The second term is $O(n^{-1/2})$ by \eqref{e76}.
Localisation in $L^2$ follows by Lemma \ref{lem0}. This proves (i).

(ii) Under the hypotheses of (ii), the sets $\Om_{f_n,\Om'}$ are convex and symmetric with respect to the vertical axis. Following the results of Jerison \cite[Theorem B]{Je95}, and Grieser and Jerison \cite[Theorem 1]{GJ96} the  second eigenfunction has to be odd in the $x$ variable, hence to have the nodal line on the vertical axis.
Indeed, assume there is a second eigenfunction  $u_{2,n}$  which is not odd in the $x$ variable. Then  $v(x,y)=u_{2,n}(x,y)+u_{2,n}(-x,y)$ is a non trivial second eigenfunction which is even in the $x$ variable, thus having a nodal line symmetric about the vertical axis.
Following \cite[Theorem 1]{GJ96} the nodal line is contained in a vertical strip of width of order $\frac 1n$. There are two possibilities: (i)
the nodal  line intersects the upper and lower boundary and, from symmetry, we get more than two nodal domains, thus ending up with a contradiction, (ii) the nodal line intersects only one of the boundaries enclosing a nodal domain with first eigenvalue of order $n^2$ contradicting that the eigenvalues do converge to $\pi^2$.

\hspace*{\fill }$\square $

\section{Example of $\kappa$-localisation for Neumann eigenfunctions\label{sec3}}
In this section we construct a sequence of simply connected, planar, polygonal domains for which the corresponding sequence of first Neumann eigenfunctions $\kappa$-localises in $L^2$.

Localisation of the first  Neumann eigenfunction has been implicitly noted in \cite[Theorem 4.1]{AHH91} based on the following (Courant-Hilbert) example, with the geometry similar to Fig  1.
Let $\eta >0$ and define for $\vps>0$ small

$${ R}=(-1, 0) \times (-1, 1),$$

$$T_{\vps,\eta}=[0,\vps] \times (- \vps^\eta, \vps^\eta),$$

$${ S}_\vps=(\vps, 2\vps)\times (-\vps, \vps),$$
and

  $$\Om_\vps:= { R}\cup T_{\vps,\eta}\cup { S}_\vps.$$

Consider the Neumann eigenvalue problem in $\Om_\vps$, and denote by $\mu_1(\Om_\vps)$ the first non-zero Neumann eigenvalue of the Laplace operator. Let $u_\vps$ {  be} a first $L^2$-normalised corresponding eigenfunction. The following result was proved in \cite[Theorem 4.1]{AHH91} (also \cite{Ar95}): let $\eta >3$ and  $\vps \rightarrow 0$, then $\mu_1(\Om_\vps) \rightarrow 0$ and ${  \int_{S_\vps}} u_\vps^2 dx \rightarrow 1$. In other words, the sequence of the first Neumann eigenfunctions localises.

We introduce the following geometry. For every small $ \theta  >0$ and $\delta $ in a neigbourhood of $0$, we define the following sets.
The open rectangle
$${ S}=(-1,0)\times (-1, 1)\subset \R^2,$$
with $\mu_1({ S})=  \frac{\pi^2}{4} $ simple, and
the rectangle
$${ R}_{\delta,\theta}=[0,1+\delta) \times (-\theta, \theta).$$
Note that the first eigenvalue of the segment of length $1$ and with Dirichlet boundary conditions at one vertex and Neumann boundary conditions at the opposite vertex is equal to $\frac{\pi^2}{4} $, and is also simple.

Let
$$\Om_{\delta, \theta} = { S} \cup { R}_{\delta,\theta},$$
and let $u^1_{\delta, \theta}$ {  be a} first eigenfunction.
See Figure   4.
\medskip

%\begin{figure}\label{fig1vbb.3}
%\centering
%\includegraphics[width=3cm]{figure3.png}
%\caption{$S\cup R_{\delta,\theta}$}
%\end{figure}
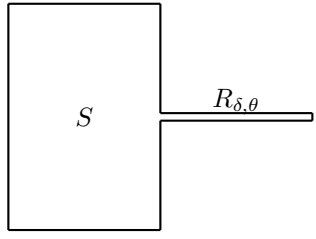
\begin{figure}
\centering
\begin{tikzpicture}
\draw[black, thick](0,-0.5)--(2,-0.5);
\draw[black, thick](2,-0.5)--(2,0.95);
\draw[black, thick](2,1.05)--(2,2.5);
\draw[black, thick](2,2.5)--(0,2.5);
\draw[black, thick](2,0.95)--(4,0.95);
\draw[black, thick](4,0.95)--(4,1.05);
\draw[black, thick](4,1.05)--(2,1.05);
\draw[black, thick](0,2.5)--(0,-0.5);
\node at (1,1.0)    {$S$};
\node at (3,1.2)    {$R_{\delta,\theta}$};
\end{tikzpicture}
\caption{$S\cup R_{\delta,\theta}$}
\label{fig3}
\end{figure}
\medskip
\begin{theorem}
Let $\kappa\in (0,1)$ be fixed.  There exists a sequence of sets of the form $\Om_{\delta, \theta}$ for which the first Neumann eigenfunction $\kappa$-localises.
\end{theorem}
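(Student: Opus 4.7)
The proof would proceed in four steps, mirroring the proof of Theorem 2. First, define
$$F(\theta, \delta) = \int_{R_{\delta, \theta}} (u^1_{\delta, \theta})^2,$$
which I would argue is continuous on $(0, 1/n) \times [-1/n, 1/n]$ for each $n \in \N$, by $\gamma$-continuity of the first non-trivial Neumann eigenpair on simply connected planar polygonal domains under Lipschitz perturbations (cf.\ \cite[Chapter 4]{BB05}).

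The crux will be the spectral dichotomy
$$\lim_{\theta \downarrow 0} F(\theta, 1/n) = 1, \qquad \lim_{\theta \downarrow 0} F(\theta, -1/n) = 0.$$
For $\delta > 0$, I would take the test function equal to $\sin(\pi x_1/(2(1+\delta)))$ on $R_{\delta, \theta}$, extended to an approximately mean-zero-compensating constant on $S$ with a smooth cutoff across the junction (whose Dirichlet energy is $O(1/\log(1/\theta))$ by the $2$-dimensional logarithmic capacity of a segment of length $2\theta$), giving
$$\mu_1(\Omega_{\delta, \theta}) \le \frac{\pi^2}{4(1+\delta)^2} + o(1) < \mu_1(S) = \frac{\pi^2}{4}.$$
For any admissible test $u$ on $\Omega_{\delta, \theta}$ with $\int u = 0$ and $s^2 := \|u|_S\|_2^2$ bounded away from $0$, the Poincar\'e inequality on $S$ (with a mean correction of order $|S|^{-1}(\int_R u)^2 = O(\theta)$) forces Rayleigh quotient at least $(\pi^2/4)s^2 - O(\theta)$; combined with the strict upper bound above, this forces $s^2 \to 0$, i.e.\ $L^2$-concentration on $R_{\delta, \theta}$. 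For $\delta < 0$, the antisymmetric function $-\sin(\pi x_2/2)$ on all of $\Omega_{\delta, \theta}$ has mean zero on both pieces and Rayleigh quotient $\pi^2/4 + O(\theta) < \pi^2/(4(1+\delta)^2)$ for small $\theta$, and the symmetric concentration argument yields mass on $S$.

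Third, I would extend $F$ continuously to the endpoints $(0, \pm 1/n)$ via the limits above, and follow Step 2 of the proof of Theorem 2 verbatim: along the semicircular curve $C_\eta(t) = (\eta \sin t,\, -1/n + 2t/(\pi n))$ for $t \in [0, \pi]$ with $\eta = 1/(2n)$, the intermediate value theorem produces $t^*_n$ with $F(C_\eta(t^*_n)) = \kappa$. Setting $(\theta_n, \delta_n) = C_\eta(t^*_n)$, so $\theta_n \le 1/(2n) \to 0$, and $A_n = R_{\delta_n, \theta_n}$, one has
$$\frac{|A_n|}{|\Omega_{\delta_n, \theta_n}|} = \frac{2\theta_n(1+\delta_n)}{2 + 2\theta_n(1+\delta_n)} \to 0, \qquad \int_{A_n} (u^1_{\delta_n, \theta_n})^2 = \kappa.$$
Finally, to confirm $\kappa$-localisation at exactly $\kappa$ (rather than at a strictly larger value), I would use a uniform $L^\infty$ bound on $u^1_{\delta_n, \theta_n}|_S$, proved by the subharmonicity argument used to establish \eqref{e22}, together with the limiting identification $u^1_{\delta_n, \theta_n}|_S \to -\sqrt{1-\kappa}\,\sin(\pi x_2/2)$, to conclude $\int_{B_n \cap S} (u^1_{\delta_n, \theta_n})^2 \to 0$ for any $(B_n) \in \mathfrak{A}((\Omega_{\delta_n, \theta_n}))$, while $\int_{B_n \cap R}(u^1_{\delta_n, \theta_n})^2 \le \kappa$ trivially.

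The hard part will be the spectral dichotomy. Test-function upper bounds on $\mu_1$ are routine; the difficulty lies in the complementary $L^2$-concentration statement for the first Neumann eigenfunctions on this highly asymmetric degenerating geometry --- one component has vanishing $2$-dimensional Lebesgue measure while still carrying unit $L^2$ mass --- which is not directly covered by standard $\gamma$- or Mosco-convergence results and requires a careful variational compactness argument adapted to this setting.
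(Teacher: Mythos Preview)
Your proposal has a genuine gap that is precisely the obstacle the paper's proof is built to overcome. You transplant the Dirichlet argument of Section~\ref{sec2} verbatim, in particular the claim that
\[
F(\theta,\delta)=\int_{R_{\delta,\theta}}(u^{1}_{\delta,\theta})^{2}
\]
is well defined and continuous along the curve $C_{\eta}$. In the Dirichlet setting that claim is safe because the first eigenvalue of a connected domain is always simple, so the eigenfunction is determined up to sign and varies continuously under $\gamma$-convergence. For the first \emph{nonzero Neumann} eigenvalue there is no such simplicity principle. As $\delta$ moves from $-1/n$ to $1/n$ with $\theta$ small, the limiting first eigenpair switches from one concentrated on $S$ (eigenvalue $\pi^{2}/4$) to one concentrated on $R_{\delta,\theta}$ (eigenvalue $\pi^{2}/(4(1+\delta)^{2})$); these two branches must meet, and nothing you have said excludes that they meet in a genuine double eigenvalue rather than in an avoided crossing. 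At such a point $F$ is not even well defined---there is a two-dimensional eigenspace and the mass on $R_{\delta,\theta}$ depends on the choice of eigenfunction---so the intermediate value argument collapses. The reference to \cite[Chapter 4]{BB05} does not help: continuity of eigenprojections under domain perturbation is only available at simple eigenvalues.

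The paper confronts this head-on. It fixes $\theta_{1}$ small and varies $\delta$ over $[-\delta_{1},\delta_{1}]$, and argues a dichotomy: either $\mu_{1}$ stays simple along the whole segment, in which case the mass varies continuously and one finds a point with mass exactly $\kappa$ on $R$; or at some $\delta$ the eigenvalue becomes double, and one stops there. Iterating with $\delta_{k}\to 0,\ \theta_{k}\to 0$ yields either a sequence with simple first eigenvalue and prescribed mass, or a sequence with double first eigenvalue. In the double case the paper uses Lemma~\ref{lem1} to show that, after taking suitable linear combinations, one orthonormal eigenfunction concentrates entirely on $R_{n}$ and the other on $S$; an appropriate normalised combination of the two then carries mass $\kappa$ on $R_{n}$. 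This second branch of the dichotomy is entirely missing from your outline, and it cannot simply be waved away: one would have to \emph{prove} that $\mu_{1}(\Om_{\delta,\theta})$ is simple for all small $(\theta,\delta)$, which is neither obvious nor attempted.

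A secondary remark: to rule out additional localisation on $S$ the paper uses the Sobolev embedding $H^{1}(S)\hookrightarrow L^{4}(S)$, giving $\int_{A_{n}}u_{n}^{2}\le |A_{n}|^{1/2}\|u_{n}\|_{L^{4}(S)}^{2}\to 0$; this is quicker than your $L^{\infty}$ subharmonicity route, though either would suffice once the main construction is in place.
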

Following  Jimbo \cite{Ji89} and Arrieta \cite{Ar95}, when $\delta \not=0$ is fixed and $\theta \ra 0$ the eigenvalues of the Neumann Laplacian on $\Om_{\delta, \theta}$ converge to the union of eigenvalues of the segment of length $1+\delta$ and mixed Dirichlet-Neumann boundary conditions and the Neumann spectrum of $S$.

The idea is to identify suitable pairs $(\delta_n, \theta_n) \ra (0,0)$ either with double first non-zero eigenvalue or with a simple first non-zero eigenvalue having an eigenfunction with balanced mass between $ S$ and ${ R}_{\delta_n,\theta_n}$. Both situations will lead to  $\kappa$-localisation.

In Lemma \ref{lem1} below, we give some information of the behaviour of a sequence of eigenfunctions on $\Om_{\delta, \theta} $ when $(\delta, \theta) \ra (0,0)$.
For further details concerning the spectrum on these kinds of geometries we refer to \cite{Ar95}.

\begin{lemma}\label{lem1}
Let $(\delta_n, \theta_n) \ra (0,0)$ and $(u_n, \mu_n)$
be an eigenpair on $\Om_n:= \Om_{\delta_n, \theta_n}$, such that $\int_{\Om_n} u_n^2  =1$ and $\limsup_\nif \mu_n< +\infty$. Then, there exist $\mu \ge 0$ and a subsequence (still denoted with the same index) such that
the following hold.
 \begin{enumerate}
 \item[\textup{(i)}] $u_n|_S \rau u$, weakly in $H^1(S)$, strongly in $L^2(S)$ with $\int _ S u dx =0$, and
 $$
\begin{cases}
-\Delta u = \mu  u \mbox { in } S,\\
\frac{\partial u}{\partial n} = 0  \mbox { on } \partial S.
\end{cases}
$$
 \item[\textup{(ii)}] Denoting $v_n(x,y):= \sqrt{\theta_n} u_n(-x^-+\frac{x^+}{1+\delta_n}, \theta_n y)$, $\tilde S=(-1,1)\times (-1,1)$, we have $v_n \rau v$ weakly in $H^1(\tilde S)$, strongly in $L^2(\tilde S)$, with $v(x,y)=v(x)\in H^1(-1,1)$ and
 $$
\begin{cases}
-v'' = \mu v \mbox { in } (0,1),\\
v(0) = 0, v'(1)=0.
\end{cases}
$$
\end{enumerate}
\end{lemma}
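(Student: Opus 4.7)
The approach combines standard tools for eigenfunction asymptotics under singular domain perturbations: uniform energy bounds from the eigenvalue equation, weak-strong compactness via Rellich-Kondrachov, and passing to the limit in the weak formulation against test functions adapted to the junction. First I would record the a priori bound $\int_{\Omega_n} |\nabla u_n|^2 = \mu_n \int_{\Omega_n} u_n^2 = \mu_n$, which is uniformly bounded by hypothesis, so $u_n|_S$ is bounded in $H^1(S)$. Extracting a subsequence gives $u_n|_S \rightharpoonup u$ weakly in $H^1(S)$, strongly in $L^2(S)$, and $\mu_n \to \mu \geq 0$.

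For part (i), to identify the limiting equation, take $\phi \in C^\infty(\overline{S})$ and extend it to an $H^1(\Omega_n)$ function $\tilde{\phi}_n$ by setting $\tilde{\phi}_n(x,y) := \phi(0,y)$ on $R_{\delta_n,\theta_n}$; compatibility at $\{0\}\times(-\theta_n,\theta_n)$ gives $\tilde{\phi}_n \in H^1(\Omega_n)$, with both $\|\tilde{\phi}_n\|_{L^2(R_{\delta_n,\theta_n})}$ and $\|\nabla\tilde{\phi}_n\|_{L^2(R_{\delta_n,\theta_n})}$ of order $\theta_n^{1/2}$. Testing $\int_{\Omega_n} \nabla u_n \cdot \nabla \tilde{\phi}_n = \mu_n \int_{\Omega_n} u_n \tilde{\phi}_n$ and using Cauchy-Schwarz together with the uniform $H^1$-bound on $u_n$, the contributions from the tube are $O(\theta_n^{1/2})$, while the remaining terms on $S$ pass to the limit by weak/strong convergence. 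This yields the Neumann weak formulation on $S$. For the mean-zero property, since $\mu_n > 0$ the eigenfunction $u_n$ is orthogonal to constants, $\int_{\Omega_n} u_n = 0$; by Cauchy-Schwarz $|\int_{R_{\delta_n,\theta_n}} u_n| \leq |R_{\delta_n,\theta_n}|^{1/2} \to 0$, hence $\int_S u_n \to 0$ and $\int_S u = 0$.

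For part (ii), the rescaling is calibrated so that $v_n$ is $H^1(\tilde{S})$-bounded with strong anisotropy. A change of variables in the left and right halves gives $\|v_n\|_{L^2(\tilde{S})}^2$ comparable to mass integrals of $u_n$ on subsets of $\Omega_n$, $\|\partial_x v_n\|_{L^2(\tilde{S})}^2 = O(\mu_n+1)$, and crucially $\|\partial_y v_n\|_{L^2(\tilde{S})}^2 = O(\theta_n^2 \mu_n) \to 0$. Extract $v_n \rightharpoonup v$ in $H^1(\tilde{S})$, strongly in $L^2(\tilde{S})$; the vanishing of $\partial_y v_n$ forces $\partial_y v = 0$, so $v(x,y) = v(x) \in H^1(-1,1)$. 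On the left half, $\|v_n\|_{L^2((-1,0)\times(-1,1))}^2 = \int_{(-1,0)\times(-\theta_n,\theta_n)} u_n^2$, which tends to $0$ by the strong $L^2$-convergence $u_n|_S \to u$ together with absolute continuity of the Lebesgue integral applied to the shrinking strip; hence $v \equiv 0$ on $(-1,0)$ and by $H^1$-continuity $v(0) = 0$. For the ODE, test the eigenvalue equation against $\Psi(x,y) := \phi(x/(1+\delta_n))$ on $R_{\delta_n,\theta_n}$ extended by $0$ to $S$, where $\phi \in C^\infty([0,1])$ with $\phi(0) = 0$ (ensuring $\Psi \in H^1(\Omega_n)$). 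Tracking the factors in the substitution $\tilde{x} = x/(1+\delta_n)$, $\tilde{y} = y/\theta_n$ and invoking $w_n = v_n/\sqrt{\theta_n}$, the rescaled weak identity becomes $\int_0^1 v_n' \phi' = \mu_n (1+\delta_n)^2 \int_0^1 v_n \phi$ (after integration in $\tilde{y}$); passing to the limit gives $\int_0^1 v' \phi' = \mu \int_0^1 v\phi$ for every such $\phi$, i.e., $-v'' = \mu v$ on $(0,1)$ with $v(0)=0$ and the natural Neumann condition $v'(1) = 0$ extracted by admitting $\phi(1) \ne 0$.

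The main obstacle will be the careful bookkeeping of the rescaling in (ii): the factor $\sqrt{\theta_n}$ must simultaneously normalize $\|\nabla v_n\|_{L^2(\tilde{S})}$, force $\partial_y v_n \to 0$, and keep $\|v_n\|_{L^2(\tilde{S})}$ controlled, and the pullback of 1-D test functions into $H^1(\Omega_n)$ has to respect the continuity condition across the junction $\{0\}\times(-\theta_n,\theta_n)$ in order to produce precisely the mixed Dirichlet-Neumann weak form. A minor but essential technicality is verifying that the $L^2$-mass of $v_n$ on the left half of $\tilde{S}$ vanishes, which relies on strong $L^2$-convergence of $u_n|_S$ combined with vanishing measure of the shrinking strip.
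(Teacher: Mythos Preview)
Your proposal is correct and follows essentially the same strategy as the paper: uniform $H^1$ bounds from the eigenvalue equation, Rellich compactness, and passage to the limit in the weak formulation against test functions adapted to the junction. The only minor differences are cosmetic---the paper tests against restrictions of $H^1_{\mathrm{loc}}(\R^2)$ functions in (i) rather than explicitly extending $C^\infty(\overline S)$ functions, and in (ii) it obtains $v\equiv 0$ on $(-1,0)$ via the trace theorem on $(-1,0)\times\{0\}$ rather than your (equally valid) argument using strong $L^2(S)$-convergence together with the vanishing measure of the strip.
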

Note that $u$ or $v$ in the above may be the $0$-function.
\begin{proof}
For a subsequence we can assume $\mu_n \ra \mu$ and $u_n|_\S \rau u$, weakly in $H^1( S)$. Let  $\vphi \in H^1_{loc}(\R^2)$.  Note that
\begin{align*}\Big|\int_{R_n} u_n \vphi \Big|&+ \Big|\int_{R_n} \nabla u_n \nabla \vphi \Big|\nonumber\\&\le \|u_n\|_{L^2(\Om_n)} \Big(\int_{R_n}\vphi^2 \Big)^\frac 12 + \|\nabla u_n\|_{L^2(\Om_n)} \Big(\int_{R_n}|\nabla \vphi|^2 \Big)^\frac 12 \ra 0.
\end{align*}
This implies, in particular, $\int_S u_n \ra 0$ and hence $\int_S u =0$.  Taking $\vphi|_{\Om_n}$ as a test function in $H^1(\Om_n)$ we get
$$\int_{\Om_n} \nabla u_n \nabla \vphi = \mu_n \int_{\Om_n} u_n\vphi.$$
Splitting the sums  over $\Om_n=S \cup \Rr_n$, and using the weak convergence in $H^1(S)$ we get
$$\int_{S} \nabla u \nabla \vphi = \mu \int_{S} u\vphi.$$
Since $H^1_{loc} (\R^2)|_S$ coincides with $H^1(S)$, Lemma \ref{lem1} part (i) is proved.

To prove Lemma \ref{lem1} part (ii), we note that
$$\int_{\tilde S} v_n^2 \le 1+|\delta_n|, \int_{\tilde S} \Big( \frac{\partial v_n}{\partial x} \Big)^2 \le  (1+2|\delta_n|)\mu_n,  \int_{\tilde S} \Big( \frac{\partial v_n}{\partial y} \Big)^2 \le  (1+|\delta_n|) \theta^2_n.$$
Then, for a subsequence, $(v_n)$,  $v_n\rau v$ weakly in $H^1(\tilde S)$ with $\frac{\partial v}{\partial y}=0$ in $\tilde S$. So the function $v$ depends only on the variable $x$. Moreover, $v$ is continuous and $v=0$ on $(-1,0]$. This is a consequence of the trace theorem on $(-1,0)\times \{0\}$ applied to $u_n$ giving that
$\int_{-1}^0 u_n(x,0) ^2 dx $
is bounded. This implies that $\sqrt{\theta_n} u_n(\cdot, 0)$ converges strongly to $0$ on $(-1,0)$. This also implies that the convergence is strong in $L^2(\tilde S)$.

Taking a test function $\varphi \in H^1(0,1)$ with $\varphi (0)=0$, that we extend by zero on $(-1,0)$ and constant in $y$ on $(-1,1)$ in the equation satisfied by $u_n$, we get
$$\int_{R_n} \nabla u_n \nabla \varphi = \mu_n \int_{R_n} u_n\vphi,$$
and in terms of $v_n$
$$\int_{(0,1) \times (-1,1)}  \partial_x v_n \partial _x \vphi= \mu_n (1+\delta_n) \int_{(0,1) \times (-1,1)}  v_n \vphi,$$
that we pass to the limit to get the equation.
 \end{proof}

\begin{proof}

Fix $\kappa\in (0,1)$.  Let $\delta_1 >0$. Following \cite{Ar95} we know that for $\theta \ra 0$
$$\mu_1(\Om_{\delta_1, \theta}) \ra \Big (\frac{\pi}{2+2 \delta_1} \Big)^2, \mu_2(\Om_{\delta_1, \theta}) \ra \frac{\pi^2}{4},$$
with convergence of eigenfunctions given by the preceding Lemma. Hence $\int_{R_{\delta_1, \theta}} (u^1_{\delta_1, \theta})^2 \ra 1$.

At the same time
$$\mu_2(\Om_{-\delta_1, \theta}) \ra \Big (\frac{\pi}{2-2 \delta_1} \Big)^2, \mu_1(\Om_{-\delta_1, \theta}) \ra \frac{\pi^2}{4}.$$
Hence $\int_{R_{\delta_1, \theta}} (u^1_{-\delta_1, \theta})^2 \ra 0$.

We choose $\theta$ small enough such that
$$\int_{R_{\delta_1, \theta}} (u^1_{\delta_1, \theta})^2 \ge \frac {1+\kappa}{2}  \mbox{ and  } \int_{R_{\delta_1, \theta}} (u^1_{-\delta_1, \theta})^2 \le \frac \kappa2.$$
For this value of $\theta$, denoted by $\theta_1$, we vary  $\delta$ continuously from $-\delta_1$ to $\delta_1$. The spectrum of the Neumann Laplacian varies continuously along this trajectory, and the eigenfunctions corresponding to simple eigenvalues are continuous. In particular if the first eigenvalue is always simple, then the mass of the corresponding eigenfunction varies continuously on $S$ (and its complement).

There are two situations: either the first eigenvalue is simple along the entire trajectory, or not. In the latter case, we stop at the point when the eigenvalue becomes double.

We now repeat this procedure, taking $\delta_2= \delta_1/2$, choosing $\theta _2 \le \theta_1/2$, and so on.
In this way we find either a sequence of sets $(\Om_n)$ either with simple first eigenvalues and with balanced mass $1-\kappa$ on $S$ and $\kappa$ on $R_n$, or a sequence of sets $(\Om_n)$ with double first eigenvalues.

If the first situation occurs, the sequence of eigenfunctions $\kappa$-localises. Indeed, on $S$ the sequence converges to a first eigenfunction of $S$ which has the mass $1-\kappa$ and no localisation can occur on $S$. For   $A_n \sq S$ we have
$$\int_{A_n} u_n^2 \le |1_{A_n} | _{L^2} |u_n^2| _{L^2} \ra 0,$$
from the continuous injection $H^1(S) \sq L^4(S)$.

If the second situation occurs, let us denote $u_n^1, u_n^2$ two normalised $L^2$-orthogonal eigenfunctions corresponding to the first (double) eigenvalue. We follow the masses of the eigenfunctions: assume (for a subsequence) that
$$\int_{S} (u_n^1)^2 dx \ra a, \quad \int_{S}  (u_n^2)^2 dx \ra b.$$
If both $a\ne 0$ and $b\ne 0$, then we consider the weak $H^1(S)$-limits of $u_n^1|_S$ and  $u_n^2|_S$, denoted $u^1,u^2$, respectively. Both of them are non-zero eigenfunctions corresponding to the first eigenvalue on $S$. This being simple, there exists $\lb\in \R$ such that $u^1+\lb u^2=0$. This implies that the sequence given by $\tilde u_n=\frac{1}{\sqrt{1+\lb^2}}(u_n^1+\lb u_n^2)$ is a sequence of normalised first eigenfunctions converging to $0$ on $S$. In other words, we can assume that $a=0$ and relabel $u_n^1=\tilde u_n$.

A similar argument applied to $R_{\delta_n, \theta_n}$, gives that $b=1$. Indeed, if $b\not=1$, then the sequences $v_n^1,v_n^2$ constructed in Lemma  \ref{lem1} part (ii) would converge to a non-zero first eigenfunction on the segment $(0,1)$, so that the previous argument can be used again.

Since we know now that for suitable sequences of eigenfunctions we have $a=0,b=1$, we consider the sequence
$\kappa  u_n^1+\sqrt{1-\kappa}u_n^2$
of normalised first eigenfunctions on $\Om_n$ which $\kappa$-localises.
\end{proof}
The data in Figure  5 have been obtained with the MATLAB PDE toolbox, and illustrate the mass distribution of the first eigenfunction.
\begin{figure}[!ht]\label{fig2vbb}
\includegraphics[width=4cm]{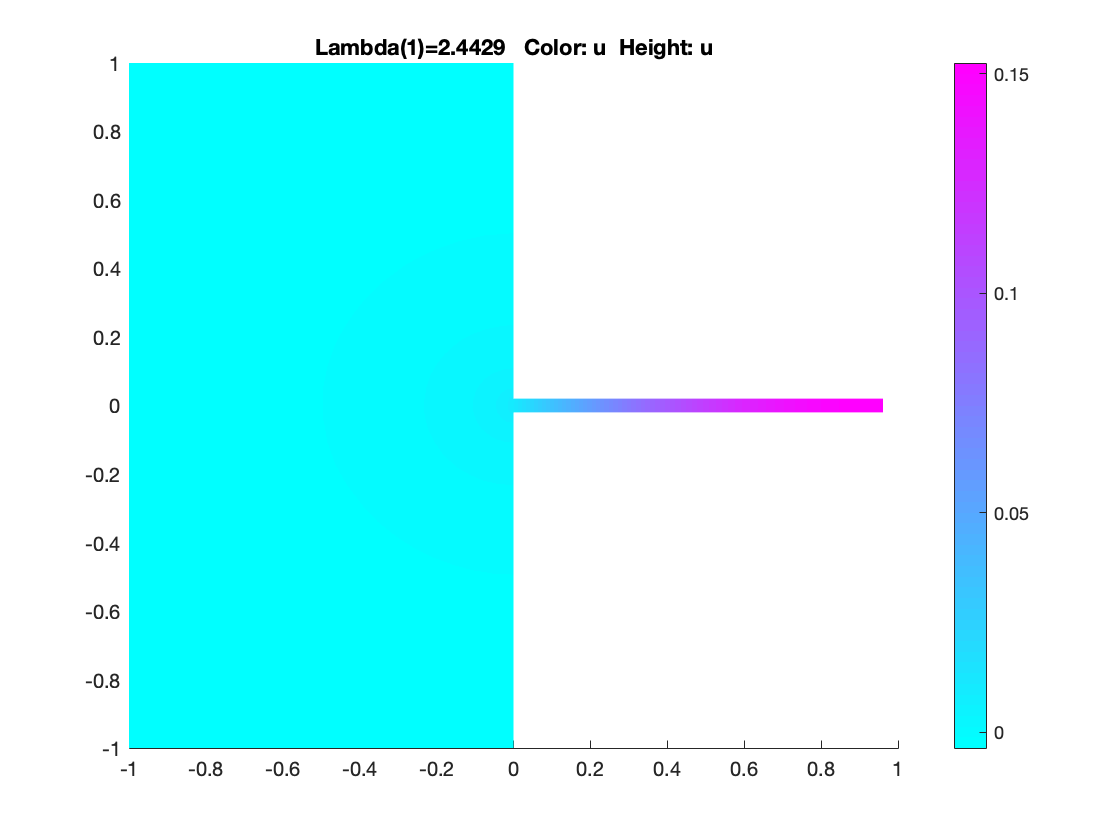}\hskip 0cm
\includegraphics[width=4cm]{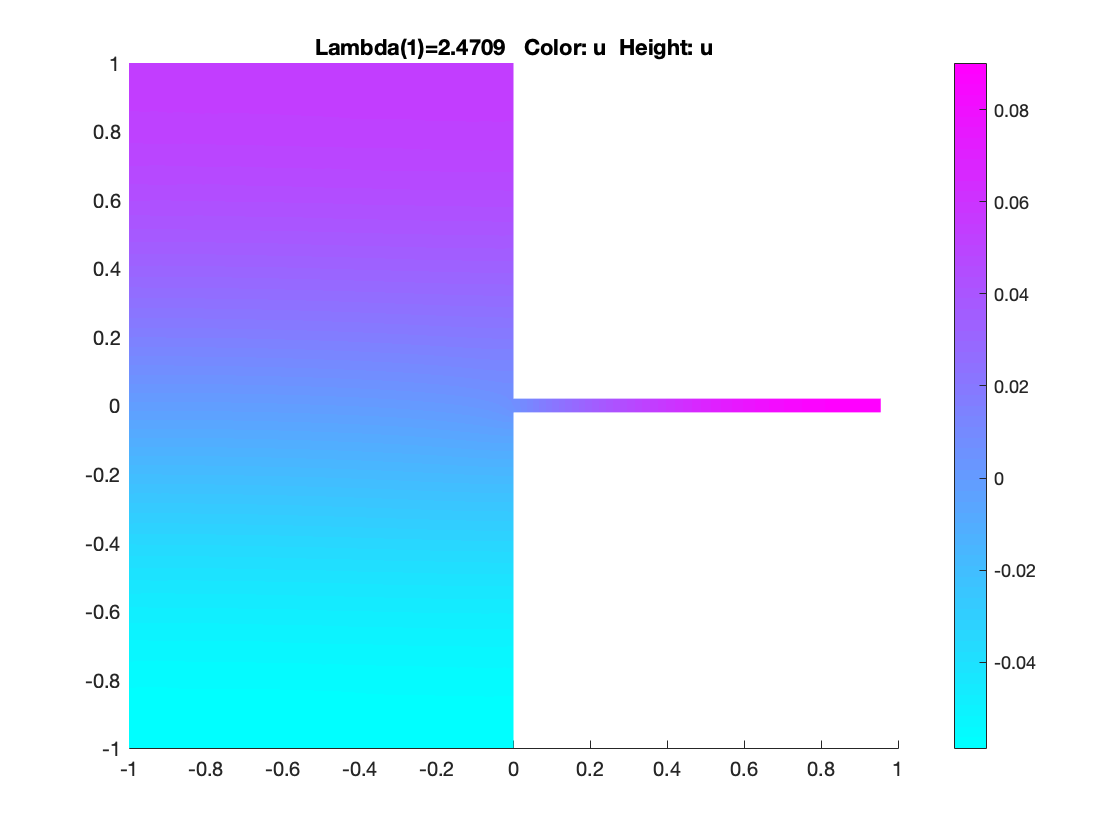}\hskip 0cm
\includegraphics[width=4cm]{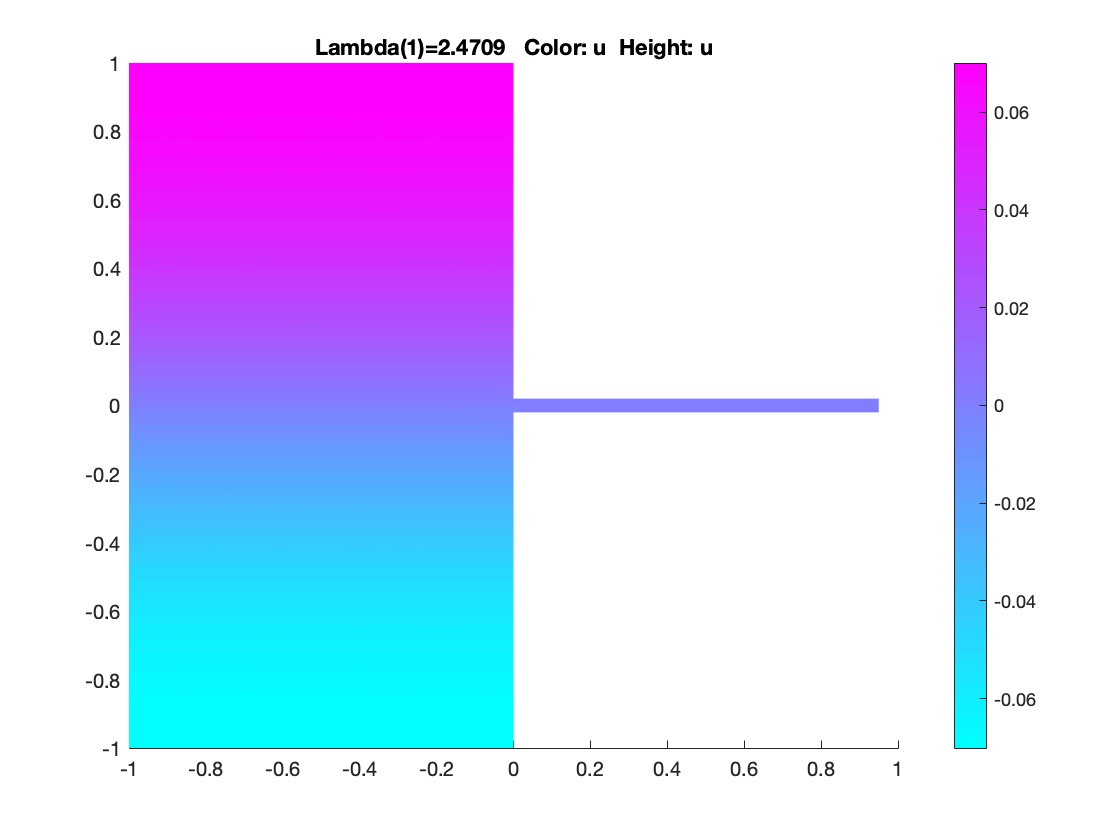}

\includegraphics[width=4cm]{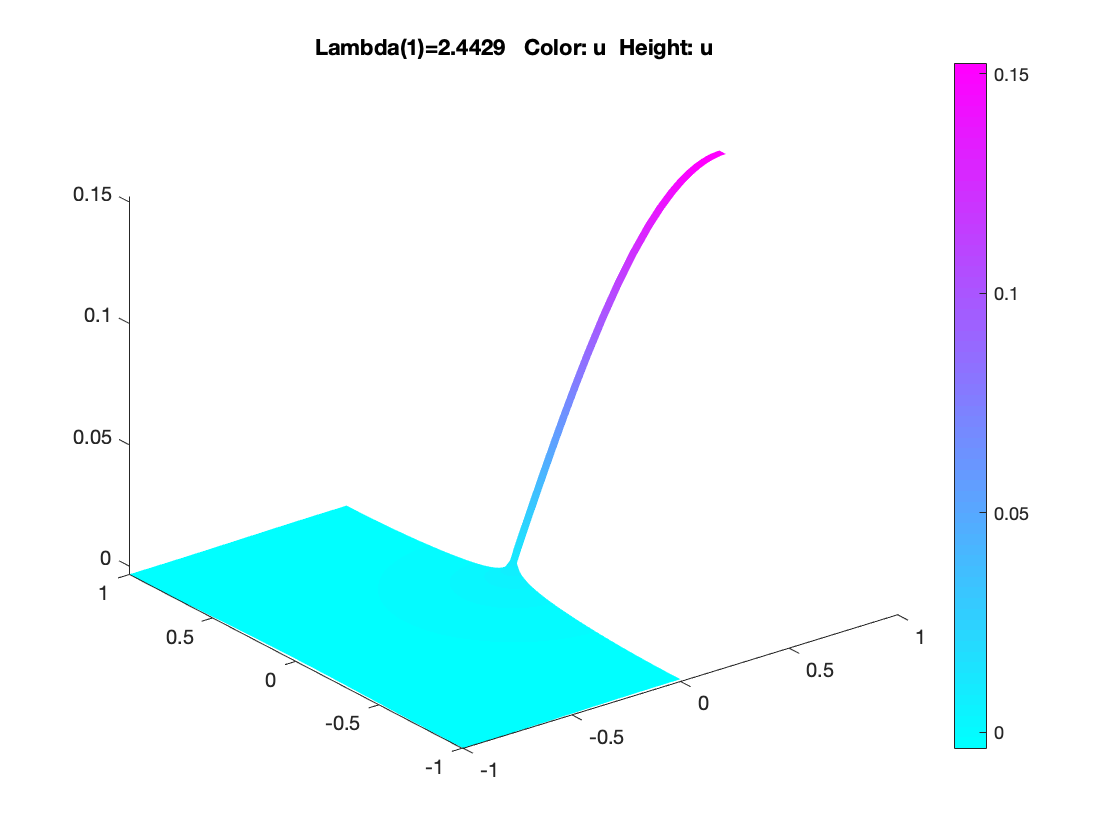}\hskip 0cm
\includegraphics[width=4cm]{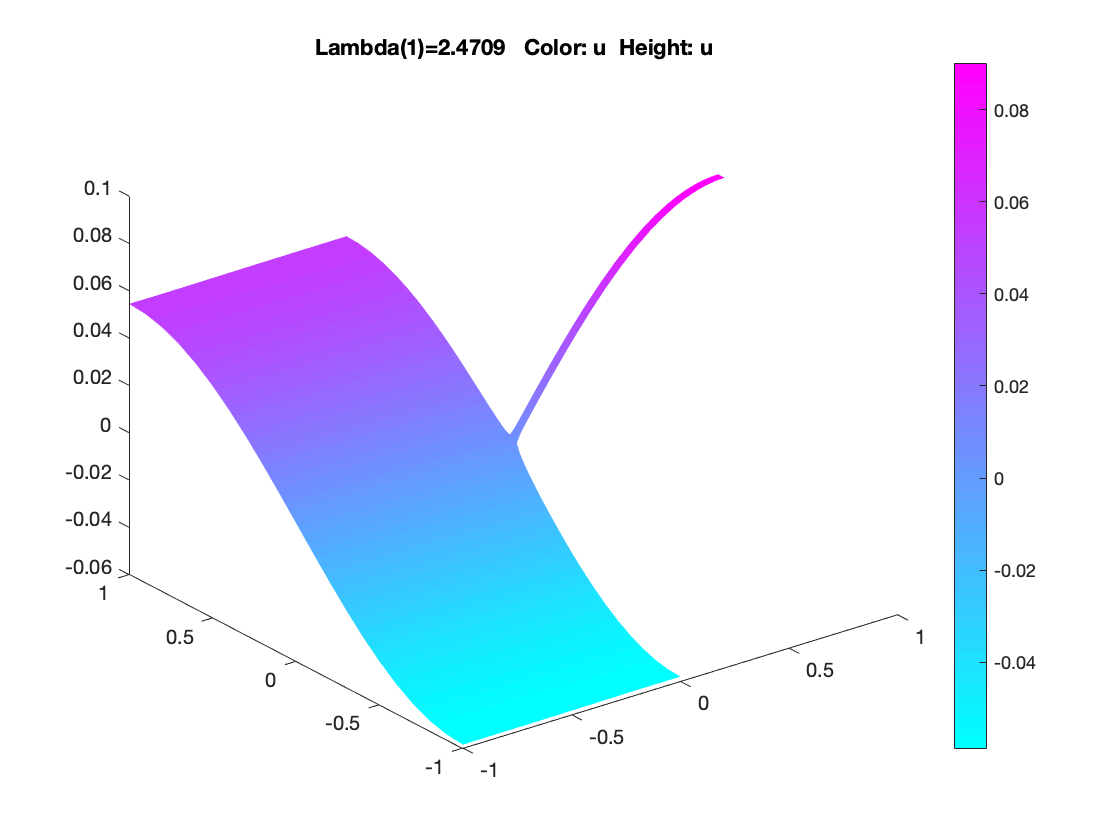}\hskip 0cm
\includegraphics[width=4cm]{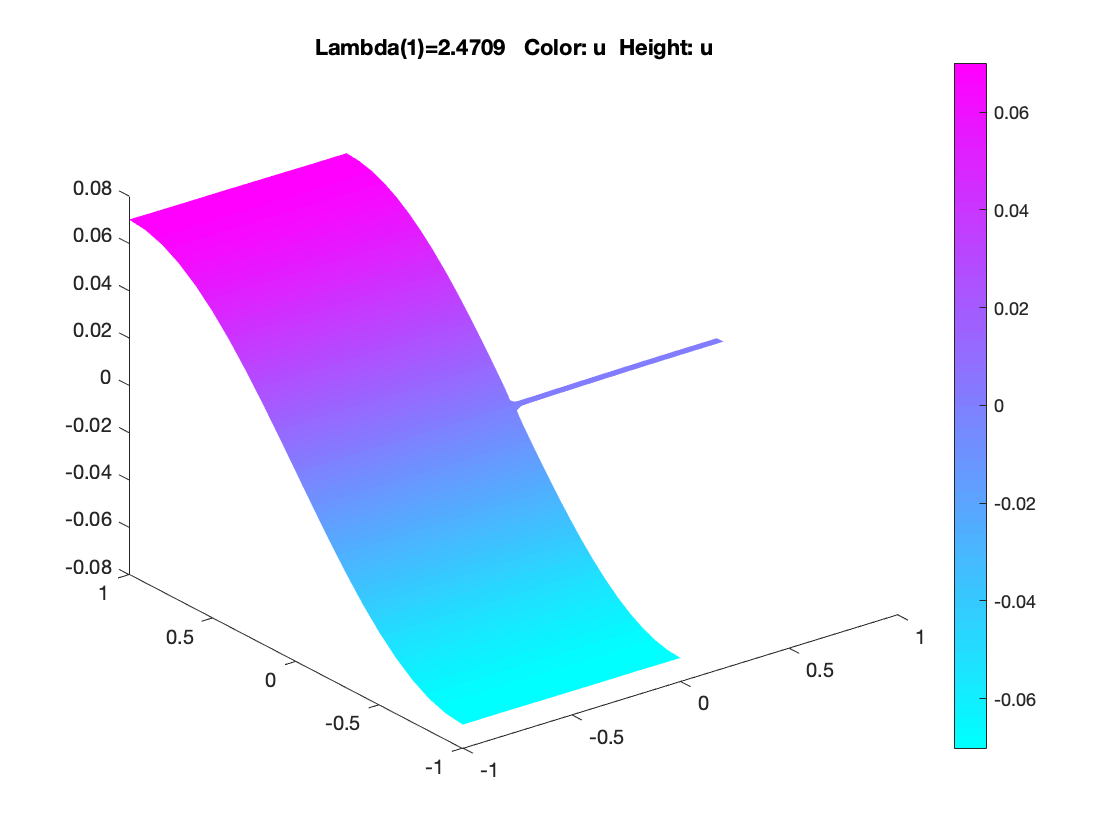}
\caption{The graph of $u^1_{\delta, \theta}$, from localisation to non localisation, when perturbing the length of the thin rectangle: $\theta=0.02$ and $\delta=-0.039$, $\delta=-0.04491$, $\delta=-0.05$, respectively.}
\end{figure}

%------
% Insert the bibliography.
%------

\end{document}